    \DeclareFontFamily{U}{wncy}{}
    \DeclareFontShape{U}{wncy}{m}{n}{<->wncyr10}{}
    \DeclareSymbolFont{mcy}{U}{wncy}{m}{n}
    \DeclareMathSymbol{\Sh}{\mathord}{mcy}{"58} 
\author[]{Simone Maletto}
\title[]{Galois representations with big image in the general symplectic group $\op{GSp}_4(\Z_p)$}
\date{}
\newtheorem{defi}{Definition}[section]
\theoremstyle{definition}
\theoremstyle{definition}
\theoremstyle{definition}
\newtheorem{cor}[defi]{Corollary}
\theoremstyle{definition}
\theoremstyle{definition}
\newtheorem{rmk}[defi]{Remark}
\theoremstyle{definition}
\newtheorem{ex}[defi]{Example}
\theoremstyle{definition}\usepackage{amsmath}
\newtheorem{question}[defi]{Question}
\theoremstyle{plain}
\newtheorem{thm}[defi]{Theorem}
\theoremstyle{plain}
\newtheorem{prop}[defi]{Proposition}
\theoremstyle{plain}
\newtheorem{lem}[defi]{Lemma}
\newcommand{\Ker}{\operatorname{Ker}}
\newcommand{\GL}{\operatorname{GL}}
\newcommand{\N}{\mathbb{N}}
\newcommand{\Z}{\mathbb{Z}}
\newcommand{\Q}{\mathbb{Q}}
\newcommand{\R}{\mathbb{R}}
\newcommand{\C}{\mathbb{C}}
\newcommand{\F}{\mathbb{F}}
\newcommand{\Fp}{\mathbb{F}_p}
\newcommand{\pair}{(\alpha,\beta)}
\newcommand{\Symp}{\operatorname{GSp}_4(\mathbb{Z}_p)}
\definecolor{mygreen}{RGB}{28,172,0}
\definecolor{mylilas}{RGB}{170,55,241}
\newcommand{\op}[1]{\operatorname{#1}}
\subjclass[2020]{11F80}
\keywords{Galois representations, deformation theory}
\begin{document}

\maketitle
\begin{abstract}
   Let $p$ be an odd prime and $e_p$ be its irregularity index. If $4e_p+8 <\frac{p-1}{2}$ we construct a  Galois representation with image in the diagonal torus of $\op{GSp}_4(\Fp)$  that lifts to a characteristic $0$ representation unramified at all primes $\ell\neq p$ with image containing a finite index subgroup of $\Symp$.
  
\end{abstract}

\section{Introduction}

\par It is well known that elliptic curves and abelian varieties defined over the rationals give rise to compatible families of Galois representations. The study of Galois representations arising in geometry has led to exciting developments in number theory, like the Modularity theorem \cite{wiles1995modular,taylor1995ring, breuil2001modularity} and the resolution of the Sato-Tate conjecture \cite{barnet2011family}. Principally polarized abelian surfaces give rise to Galois representations with image in $\op{GSp}_4(\Z_p)$. These representations are ramified at $p$ and the set of primes at which the abelian variety has bad reduction. In this paper, we are interested in the following question.
\begin{question}
Given a prime number $p$, does there exist a continuous Galois representation
\[\rho:\op{Gal}(\bar{\Q}/\Q)\rightarrow \op{GSp}_4(\Z_p)\]
satisfying the following properties
\begin{enumerate}
    \item $\rho$ is unramified at all primes $\ell\neq p$, 
    \item the image of $\rho$ contains a finite index subgroup of $\op{Sp}_4(\Z_p)$?
\end{enumerate}
\end{question}
A related question was first studied by R.Greenberg in \cite{greenberg2016galois}, who showed that if $p$ is a regular prime such that $p\geq 4\lfloor \frac{n}{2}\rfloor +1$, then there is a continuous Galois representation $\rho:\op{Gal}(\bar{\Q}/\Q)\rightarrow \op{GL}_n(\Z_p)$ which is unramified away from $p$ and has big image. In \cite{cornut2018generators}, C.Cornut and J.Ray  further generalized Greenberg's results for groups other than $\op{GL}_n$, without relaxing the regularity assumptions. S. Tang \cite{tangalgebraic} proved that the regularity assumption can be relaxed provided the representation is allowed to ramify at a large set of primes. A.Ray \cite{ray2021constructing} proved that the results of Greenberg generalize to irregular primes as well, provided $p\geq 2^{n+2+2e_p}+3$, where $e_p$ is the irregularity index of $p$ as defined in \ref{dep}. C.Maire subsequently proved stronger results in \cite{maire2021galois}. Maire showed that for every prime number $p\geq 3$ and every integer $n\geq 1$, there exists a continuous Galois representation $\rho:\op{Gal}(\bar{\Q}/\Q)\rightarrow \op{GL}_n(\Z_p)$ which has open image
and is unramified outside $p$ (resp. outside $\{2,p\}$) when $p \equiv 3 \mod 4$ (resp. $p \equiv 1
\mod 4$).

\par The method used in this paper generalizes that of A.Ray to $\op{GSp}_4$, and shows that small primes congruent to $3$ modulo $4$ can be dealt with as well when the group is fixed. Using some group cohomology results we show that we can start from a residual diagonal representation and deform it, under certain hypotheses, producing a characteristic $0$ lift in such a way that its image contains the kernel of the quotient map $\Symp\rightarrow\op{GSp}_4(\Z/p^2\Z)$. Later, we show that if the prime $p$ of irregularity index $e_p$ is such that the inequality $4e_p+8<\frac{p-1}{2}$ holds, there always is at least one residual representation that can be lifted in such a way. Lastly we give an example of how to find such representation in the first nontrivial case of $p=37$.

We approach the lifting problem with the same strategy of \cite{ray2021constructing}, by lifting the Galois representation one step at the time. We deal with the cohomological obstruction to this process by appropriately choosing a suitable diagonal residual representation. The second issue is showing that this procedure yields a representation with big image. In order to guarantee this we choose an appropriate deformation at every step.

Noticeably we are able to produce representations with larger image as we are working in fixed dimension. We get a very precise conditions on the irregularity in Section 5, showing that there exists a residual representation suitable for the lifting procedure whenever the prime $p$ is such that the inequality $4e_p+8<\frac{p-1}{2}$ where $e_p$ is the irregularity index of $p$ .

The paper is brief and self contained, Section \ref{seccohom} contains some preliminary results on Galois cohomology that are crucial for the discussion in the two following sections where we introduce the deformation problem. Section \ref{secdefo} contains the preparatory results for the deformation problem, and the main theorem is proved in Section \ref{secthm}. Section \ref{seccount} reduces the hypothesis of the main theorem to a combinatorics problem in order to show that there are in fact diagonal residual representations that work as initial data for the deformation procedure.
\section*{Acknowledgments} The author would like to thank Vinayak Vatsal and Antonio Alfieri for the many helpful discussions.
\section{Preliminaries on Galois Cohomology}\label{seccohom}
\par In this section, we recall certain basic results in Galois cohomology, which will be integral to the lifting arguments in this paper. The standard references are \cite{ neukirch2013cohomology, serre2013galois}. For any field $K$, let $K^{\op{sep}}$ be a choice of separable closure, and let $\op{G}_K:=\op{Gal}(\bar{K}/K)$, the absolute Galois group of $K$. For each prime $\ell$, fix an embedding $\iota_\ell:\bar{\Q}\hookrightarrow \bar{\Q}_\ell$, and let  $\op{G}_\ell$ denote $\op{G}_{\Q_\ell}$. Note that the choice of embedding $\iota_\ell$ gives rise to an inclusion $\op{G}_\ell\hookrightarrow \op{G}_{\Q}$. For a finite set of primes $S$, set $\Q_S$ to be the maximal algebraic extension of $\Q$ in which the primes $\ell\notin S$ are unramified. Write $\op{G}_S$ for the Galois group $\op{Gal}(\Q_S/\Q)$ and analogously set $\op{G}_\infty=\op{Gal}(\C/\R)$. 
\par Throughout this section, we fix a prime $p$ and $S$ a finite set of primes containing $p$. Let $M$ be a finite dimensional $\F_p$-vector space on which $\op{G}_S$ acts. Note that for every prime $\ell$, $M$ is viewed as a $\op{G}_\ell$-module via the inclusion $\op{G}_\ell\hookrightarrow \op{G}_{\Q}$ . Let $\mu_p$ denote the $\F_p$-vector space generated by the group of $p$-th roots of unity and $M^*:=\op{Hom}(M, \mu_p)$. Associated to any $\F_p$-vector space is the dual space $V^{\vee}:=\op{Hom}(V, \F_p)$.

\begin{thm}[Tate Local duality] \cite[Pp. 91,92]{serre2013galois} 
Let $M$ be as above and $\ell$ be any prime number. Then for $i=0,1,2$, the cup product induces a perfect pairing
\[H^i(\op{G}_\ell,M)\times H^{2-i}(\op{G}_\ell,M^*)\xrightarrow{\cup} H^2(\op{G}_\ell,\mu_p)\xrightarrow{\sim} \F_p.\]
In particular, this pairing sets up a natural isomorphism \[H^{i}(\op{G}_\ell,M)\simeq H^{2-i}(\op{G}_\ell,M^*)^{\vee}\] for $i=0,1,2$.
\end{thm}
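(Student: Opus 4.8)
The plan is to run the classical proof via local class field theory, in four moves. First I would pin down the target of the pairing: the Kummer sequence $1\to\mu_p\to\overline{\Q}_\ell^{\times}\xrightarrow{x\mapsto x^p}\overline{\Q}_\ell^{\times}\to 1$ together with Hilbert 90 gives $H^2(\op{G}_\ell,\mu_p)\cong\op{Br}(\Q_\ell)[p]$, and the invariant isomorphism $\op{inv}\colon\op{Br}(\Q_\ell)\xrightarrow{\sim}\Q/\Z$ of local class field theory identifies this with $\tfrac1p\Z/\Z\cong\F_p$, which is the right-hand isomorphism in the statement; the same package (plus the fact that $\op{G}_\ell$ is topologically finitely generated of $p$-cohomological dimension $2$) shows that all groups $H^i(\op{G}_\ell,M)$ are finite and vanish for $i>2$, so the indices $i=0,1,2$ are exactly the range where self-duality can hold.

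Next I would set up and study the pairing formally. The evaluation map $M\otimes_{\F_p}M^{*}\to\mu_p$ is $\op{G}_\ell$-equivariant, so cup product followed by this map defines $H^i(\op{G}_\ell,M)\times H^{2-i}(\op{G}_\ell,M^{*})\to H^2(\op{G}_\ell,\mu_p)\cong\F_p$. I would record that this is functorial in $M$, and that the graded-commutativity of cup product makes the assertion ``all three pairings are perfect'' symmetric under $M\leftrightarrow M^{*}$. Then, given a short exact sequence $0\to M'\to M\to M''\to 0$ of finite $\op{G}_\ell$-modules, the long exact cohomology sequences for $M'$, $M$, $M''$ and the dually-indexed ones for $(M'')^{*}$, $M^{*}$, $(M')^{*}$ fit into a ladder, and a five-lemma argument shows that the class of modules for which the theorem holds is closed under subobjects, quotients and extensions.

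The third move is the dévissage to the base case. Any finite $\op{G}_\ell$-module $M$ of $p$-power order is trivialized by some open subgroup $U=\op{G}_L$ with $L/\Q_\ell$ finite, and $M$ embeds into $\op{Ind}_{U}^{\op{G}_\ell}(\op{Res}_U M)\cong\op{Ind}_{U}^{\op{G}_\ell}(\Z/p)^{\dim M}$; via Shapiro's lemma (and the compatibility of invariant maps under corestriction, which identifies $\op{Hom}(\op{Ind}_U^{\op{G}_\ell}\Z/p,\mu_p)$ with $\op{Ind}_U^{\op{G}_\ell}\mu_p$) the pairings for these induced modules over $\op{G}_\ell$ become the pairings for the trivial module $\Z/p$ over $\op{G}_L$. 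So by the closure properties of the previous paragraph the whole theorem follows from the single case of the trivial module $\F_p$ over an arbitrary finite extension of $\Q_\ell$. In that case $i=0$ pairs $H^0(\op{G}_\ell,\F_p)=\F_p$ with $H^2(\op{G}_\ell,\mu_p)=\F_p$ through the module action, which is perfect by the first step; $i=2$ follows by the $M\leftrightarrow M^{*}$ symmetry; and $i=1$ is the crux: identify $H^1(\op{G}_\ell,\mu_p)=\Q_\ell^{\times}/(\Q_\ell^{\times})^p$ by Kummer theory and $H^1(\op{G}_\ell,\F_p)=\op{Hom}(\op{G}_\ell,\F_p)\cong\op{Hom}\bigl(\Q_\ell^{\times}/(\Q_\ell^{\times})^p,\F_p\bigr)$ by the reciprocity map, and check that the cup-product pairing between them is the norm-residue (Hilbert) symbol.

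The only genuinely arithmetic ingredient, and hence the main obstacle, is the last point: the non-degeneracy of the Hilbert symbol $\Q_\ell^{\times}/(\Q_\ell^{\times})^p\times\Q_\ell^{\times}/(\Q_\ell^{\times})^p\to\F_p$ and, beneath it, the reciprocity isomorphism describing $\op{G}_\ell^{\mathrm{ab}}$; everything else is formal homological algebra. I would simply invoke this from local class field theory, but if a self-contained treatment were wanted one derives it from the explicit structure of $\Q_\ell^{\times}$ and the known abelianized local Galois group. A minor extra care is needed at $\ell=p$, where $\mu_p$ need not lie in the base field and $\dim_{\F_p}\Q_\ell^{\times}/(\Q_\ell^{\times})^p$ jumps by one, but once the Hilbert symbol is available the argument is uniform in $\ell$.
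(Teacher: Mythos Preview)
The paper does not prove this theorem; it merely states it and cites Serre's \emph{Galois Cohomology} for the proof. So there is no ``paper's own proof'' to compare against, and your write-up is not a reconstruction of anything in the article but rather an independent sketch of the classical argument.

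That said, your outline is essentially the standard proof one finds in Serre or in Neukirch--Schmidt--Wingberg: identify $H^2(\op{G}_\ell,\mu_p)$ via the Brauer group and the invariant map, set up the cup-product pairing, reduce by d\'evissage and Shapiro's lemma to the case of the trivial module over a finite extension, and there invoke local class field theory (the reciprocity map and non-degeneracy of the Hilbert symbol) for the $i=1$ case. The logic is sound. Two small points of care: in the d\'evissage step, knowing the theorem for an ambient module $N$ does not by itself give it for a submodule $M\subset N$ without also knowing it for $N/M$, so the induction has to be organized more carefully (e.g.\ by dimension-shifting, or by proving the three pairings simultaneously via the long exact sequence and the five lemma, starting from the induced-module case where Shapiro applies directly); and your remark about $\mu_p$ not lying in the base field is not special to $\ell=p$ (it fails whenever $p\nmid \ell-1$), though the genuine asymmetry at $\ell=p$ is the extra factor of $\dim M$ in the Euler characteristic, which you correctly flag. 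Neither point undermines the strategy.
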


\begin{thm}[Tate's local Euler-characteristic formula]\cite[Theorem 7.3.1]{neukirch2013cohomology}
Let $\ell$ be a prime number, and set $h^i(\op{G}_\ell, M):=\dim_{\F_p} H^i(\op{G}_\ell, M)$. Then, we have that
\[h^0(\op{G}_\ell, M)-h^1(\op{G}_\ell, M)+h^2(\op{G}_\ell, M)=\begin{cases}
-\dim_{\F_p}M &\text{ if }\ell=p,\\
0 &\text{ if }\ell\neq p.
\end{cases}\]
\end{thm}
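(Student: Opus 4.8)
The statement is Tate's local Euler--characteristic formula, and the plan is to reduce it to a computation with trivial coefficients over local fields. Write $\chi(\op{G}_\ell,M):=h^0(\op{G}_\ell,M)-h^1(\op{G}_\ell,M)+h^2(\op{G}_\ell,M)$. First I would note that $M\mapsto\chi(\op{G}_\ell,M)$ and $M\mapsto-\dim_{\F_p}M$ are both additive on short exact sequences of finite $\op{G}_\ell$-modules: for $\chi$ this comes from the long exact cohomology sequence, which terminates because $\op{cd}_p\op{G}_\ell\le 2$ and has finite terms because $\op{G}_\ell$ is topologically finitely generated. Hence both sides descend to the Grothendieck group of finite $\F_p[\op{G}_\ell]$-modules. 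Using Shapiro's lemma — which is compatible with the claimed identity, since $\dim_{\F_p}\op{Ind}_{\op{G}_L}^{\op{G}_\ell}M=[L:\Q_\ell]\dim_{\F_p}M$ and, for $\ell=p$, the right-hand side rescales by the same factor — together with the isomorphism $\op{Ind}_{\op{G}_L}^{\op{G}_\ell}\op{Res}_{\op{G}_L}M\cong M\otimes_{\F_p}\F_p[\op{Gal}(L/\Q_\ell)]$ for finite Galois $L/\Q_\ell$, one reduces to trivial coefficients $M=\F_p$ over an arbitrary finite extension $L/\Q_\ell$. (For the diagonal residual representations studied later in the paper the relevant modules are direct sums of characters, so this d\'evissage is immediate; in general it needs care, and this is the one non-formal part of the reduction.)

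For $\ell\neq p$ the target becomes $\chi=0$, and in fact this holds for every finite $M$. The wild inertia $P_\ell\subseteq\op{G}_\ell$ is pro-$\ell$ while $M$ is $p$-torsion, so $H^i(P_\ell,M)=0$ for $i\ge 1$ and inflation--restriction gives $H^\bullet(\op{G}_\ell,M)=H^\bullet(\op{G}_\ell/P_\ell,M^{P_\ell})$. I would then run the Hochschild--Serre spectral sequence for $1\to I_\ell/P_\ell\to\op{G}_\ell/P_\ell\to\hat{\Z}\to1$: the tame inertia $I_\ell/P_\ell$ and the unramified quotient $\hat{\Z}$ both have $p$-cohomological dimension $\le 1$, so the $E_2$-page is concentrated in a $2\times2$ square and degenerates, whence $\chi(\op{G}_\ell,M)=\chi(\hat{\Z},A)-\chi(\hat{\Z},A')$ for the finite $\hat{\Z}$-modules $A=H^0(I_\ell/P_\ell,M^{P_\ell})$ and $A'=H^1(I_\ell/P_\ell,M^{P_\ell})$. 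Since the Frobenius-invariants and -coinvariants of a finite $\hat{\Z}$-module have the same order, $\chi(\hat{\Z},-)\equiv 0$, and the case $\ell\neq p$ follows.

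For $\ell=p$ it remains to prove $\chi(\op{G}_L,\F_p)=-[L:\Q_p]$ for every finite $L/\Q_p$. Here $h^0(\op{G}_L,\F_p)=1$; Tate local duality (the theorem above) gives $h^2(\op{G}_L,\F_p)=h^0(\op{G}_L,\mu_p)=\dim_{\F_p}\mu_p(L)$; and local class field theory identifies $H^1(\op{G}_L,\F_p)=\op{Hom}(\op{G}_L^{\mathrm{ab}},\F_p)$ with $\op{Hom}(L^\times/(L^\times)^p,\F_p)$. From $L^\times\cong\Z\times\Oo_L^\times$ and $\Oo_L^\times\cong\mu(L)\times\Z_p^{[L:\Q_p]}$ one gets $h^1(\op{G}_L,\F_p)=\dim_{\F_p}L^\times/(L^\times)^p=1+[L:\Q_p]+\dim_{\F_p}\mu_p(L)$, and therefore $\chi(\op{G}_L,\F_p)=1-\bigl(1+[L:\Q_p]+\dim_{\F_p}\mu_p(L)\bigr)+\dim_{\F_p}\mu_p(L)=-[L:\Q_p]$, consistent with the general formula via Shapiro.

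The main obstacle is not the cohomological bookkeeping but the arithmetic input: the case $\ell=p$ rests squarely on the local class field theory description of $\op{G}_{\Q_p}^{\mathrm{ab}}$ (equivalently, on the computation of local Brauer groups, which also underlies Tate local duality), whereas the case $\ell\neq p$ is essentially formal once the tame/wild filtration of $\op{G}_\ell$ is in hand; the d\'evissage to trivial coefficients is the only step requiring genuine attention in full generality.
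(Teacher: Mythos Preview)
The paper does not prove this theorem; it is stated with a reference to \cite[Theorem~7.3.1]{neukirch2013cohomology} as one of several standard Galois-cohomology facts recalled in Section~\ref{seccohom}, so there is no in-paper argument to compare against.

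Your sketch is a correct outline of the classical proof. The case $\ell\neq p$ via the wild/tame filtration and $\chi(\hat{\Z},-)=0$ on finite modules is complete as written, and the computation of $\chi(\op{G}_L,\F_p)=-[L:\Q_p]$ from local class field theory and the structure of $\Oo_L^\times$ is the standard one. The one step you rightly flag as non-formal --- the reduction to trivial coefficients over varying $L$ --- is where the argument has real content: the isomorphism $\op{Ind}_{\op{G}_L}^{\op{G}_\ell}\op{Res}_{\op{G}_L}M\cong M\otimes_{\F_p}\F_p[\op{Gal}(L/\Q_\ell)]$ alone does not let you solve for $[M]$; what one actually uses is Artin's induction theorem, which after tensoring with~$\Q$ expresses every class in the Grothendieck group of finite $\F_p[\op{Gal}(L/\Q_\ell)]$-modules as a rational combination of permutation modules $\op{Ind}_H^G\F_p$. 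Since both $M\mapsto\chi(\op{G}_\ell,M)$ and $M\mapsto-\dim_{\F_p}M$ are $\Z$-valued, additive, and (by Shapiro) compatible with induction, agreement on all such permutation modules forces agreement everywhere. With that ingredient named, your argument is complete.
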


Let $G$ be any group and $H$ a normal subgroup of $G$. Let $M$ be a $G$-module. The inclusion map $H\hookrightarrow G$ induces an homomorphism in cohomology, called the restriction (to $H$) homomorphism \[\text{Res}:H^{i}(G,M)\rightarrow H^{i}(H,M)^{G/H}.\] Obtained by composition at the level of cochain complexes.
Similarly, as $H$ is normal we can consider the quotient $G/H$ and the $G/H$-module $M^H$ of $H$-invariants. The projection map induces again an homomorphism in cohomology, now called the inflation homomorphism, \[\text{Inf}:H^{i}(G/H,M^H)\rightarrow H^{i}(G,M).\]
\begin{prop}[Inflation-Restriction sequence]\cite[Chapter 4, Section 5]{cassels1968algebraic} 
With respect to the notation above the sequence 
\[0\rightarrow H^1(G/H,M)\xrightarrow[]{\text{Inf}}H^1(G,M)\xrightarrow[]{\text{Res}}H^1(H,M)^{G/H}\]
is exact.

Furthermore, if $H^1(H,M)=0$ the sequence
\[0\rightarrow H^2(G/H,M)\xrightarrow[]{\text{Inf}}H^2(G,M)\xrightarrow[]{\text{Res}}H^2(H,M)^{G/H}\]
is exact.
\end{prop}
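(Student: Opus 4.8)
The plan is to argue directly with inhomogeneous cochains, which keeps the section self-contained; conceptually the two sequences together are exactly the five-term exact sequence of the Lyndon--Hochschild--Serre spectral sequence $E_2^{i,j}=H^i(G/H,H^j(H,M))\Rightarrow H^{i+j}(G,M)$, and one could invoke that instead. Note that the hypothesis $H^1(H,M)=0$ is needed only for the degree-$2$ statement.

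For the degree-$1$ sequence there are three exactness points. Injectivity of $\text{Inf}$: if a $1$-cocycle $f$ of $G/H$ valued in $M^H$ inflates to a coboundary $g\mapsto gm-m$ on $G$, restricting to $h\in H$ gives $hm=m$, so $m\in M^H$ and $f$ was already a coboundary on $G/H$. Middle exactness: $\text{Res}\circ\text{Inf}$ is induced by the trivial composite $H\hookrightarrow G\to G/H$, hence zero; conversely, if $[f]\in H^1(G,M)$ restricts to the coboundary $h\mapsto hm-m$ on $H$, subtract the global coboundary $g\mapsto gm-m$ to assume $f|_H=0$, and then the cocycle identity gives $f(gh)=f(g)$ and $f(hg)=h\,f(g)$ for $h\in H$, $g\in G$; since $hg=g\,(g^{-1}hg)$ with $g^{-1}hg\in H$ we get both $f(g)\in M^H$ and that $f$ depends only on the coset of $g$, so $[f]$ lies in the image of $\text{Inf}$.

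Now assume $H^1(H,M)=0$. Injectivity of $\text{Inf}$ in degree $2$: if a $2$-cocycle $c$ of $G/H$ valued in $M^H$ inflates to $d\beta$ with $\beta\in C^1(G,M)$, then $\beta|_H$ is a $1$-cocycle on $H$, hence a coboundary by the hypothesis; subtracting the associated global coboundary we may take $\beta|_H=0$, and then, as in degree $1$, $\beta$ descends to a cochain on $G/H$ valued in $M^H$, so $[c]=0$. Middle exactness is again formal. The main obstacle is exactness at the last spot: given $[c]\in H^2(G,M)$ with $\text{Res}[c]=0$ in $H^2(H,M)^{G/H}$, one must produce a representative inflated from $G/H$. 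First, $c|_{H\times H}$ is a coboundary $d\beta_0$ with $\beta_0\in C^1(H,M)$; extending $\beta_0$ to a set map on $G$ and subtracting its coboundary, we reduce to the case $c|_{H\times H}=0$. The remaining step, which is where $H^1(H,M)=0$ is genuinely used a second time, is to straighten such a $c$ along $H$: the obstruction to modifying $c$ by a coboundary $d\gamma$ with $\gamma|_H=0$ into a cochain constant on $H$-cosets in each variable and valued in $M^H$ is measured by $H^1(H,M)$, so it vanishes and a suitable $\gamma$ exists; then $c-d\gamma$ is by definition inflated from $G/H$. This last assembly is routine bookkeeping but is where the real content of the statement sits; anyone uncomfortable with it can instead read off both sequences from the spectral sequence above, using the vanishing $E_2^{i,1}=H^i(G/H,H^1(H,M))=0$.
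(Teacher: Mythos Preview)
The paper does not prove this proposition at all: it merely records the statement and cites \cite[Chapter~4, Section~5]{cassels1968algebraic}. So there is no ``paper's own proof'' to compare against; your write-up is strictly more than what the paper offers.

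As for the content of your argument: the degree-$1$ part is clean and correct, and it is exactly the classical cochain proof one finds in Cassels--Fr\"ohlich or Serre. For degree $2$, your treatment of the injectivity of $\text{Inf}$ is fine. At the middle term you correctly reduce to $c|_{H\times H}=0$ and correctly identify that the remaining straightening of $c$ along $H$-cosets is governed by $H^1(H,M)$; however, that paragraph is really a statement of what needs to be checked rather than a check. The honest way to finish by hand is: for fixed $g\in G$ the map $h\mapsto c(g,h)$ (and the companion $h\mapsto c(h,g)$) is a $1$-cocycle on $H$, hence a coboundary by hypothesis; using the resulting $1$-cochain $\gamma$ on $G$ one verifies $c-d\gamma$ is constant on $H$-cosets in each variable and $M^H$-valued. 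You flag this as ``routine bookkeeping,'' which is fair, but if you want the proof to be self-contained you should actually write those two lines. Your fallback to the Lyndon--Hochschild--Serre spectral sequence is the cleaner route here: the vanishing $E_2^{i,1}=H^i(G/H,H^1(H,M))=0$ kills the relevant differentials and gives the three-term sequence immediately.

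One small notational point: you (correctly) work with $M^H$ in the domain of $\text{Inf}$, while the paper's statement writes $M$; this is the usual abuse and not a mathematical issue.
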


\begin{thm}[Global Euler- Poincarè characteristic formula]\cite[Chapter 4, Proposition 11]{cornell2013modular}
Let $S$ be a finite set of non-archimedean primes and $M$ be any \textit{finite} $\op{G}_S$-module. Then, the groups $H^i(\op{G}_S, M)$ $i=0,1,2$ are finite and
\[\frac{\# H^0(\op{G}_S,M)\# H^2(\op{G}_S,M)}{\# H^1(\op{G}_S,M)}=\frac{\# H^0(\op{G}_\infty,M)}{\# M}.\]
\end{thm}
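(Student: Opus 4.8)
\emph{Proof proposal.} I would deduce the formula from global (Poitou--Tate) duality together with the local results recorded above. Write $\chi(N):=\#H^0(\op{G}_S,N)\cdot\#H^2(\op{G}_S,N)\big/\#H^1(\op{G}_S,N)$ for a finite $\op{G}_S$-module $N$; the assertion is that $\chi(M)=\#H^0(\op{G}_\infty,M)/\#M$. First reduce to the case $pM=0$: both sides are multiplicative in short exact sequences $0\to M'\to M\to M''\to 0$ of finite $\op{G}_S$-modules — for $\chi$ by the alternating product of orders along the (finite) long exact cohomology sequence, for $\#M$ trivially, and for $\#H^0(\op{G}_\infty,-)$ because $p$ is odd, so $\op{G}_\infty$ has order prime to $p$, $H^1(\op{G}_\infty,M')=0$, and $0\to H^0(\op{G}_\infty,M')\to H^0(\op{G}_\infty,M)\to H^0(\op{G}_\infty,M'')\to 0$ is exact. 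As every finite $\op{G}_S$-module has a composition series, it suffices to treat $M$ an $\Fp$-vector space; then $\#M$ is a power of $p$ and the hypothesis $p\in S$ covers every prime dividing $\#M$.

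The core is the nine-term Poitou--Tate exact sequence, which interleaves the $H^i(\op{G}_S,M)$, the local sums $\bigoplus_{v}H^i(\op{G}_v,M)$ over $v\in S\cup\{\infty\}$ (with Tate-modified cohomology at $v=\infty$), and the dual groups $H^{2-i}(\op{G}_S,M^*)^{\vee}$. All its terms are finite, so the alternating product of their orders is $1$; using Tate local duality to replace $\#H^i(\op{G}_v,M^*)$ by $\#H^{2-i}(\op{G}_v,M)$ in the dual terms, this collapses to
\[\chi(M)\,\chi(M^*)=\prod_{v\in S\cup\{\infty\}}\frac{\#H^0(\op{G}_v,M)\,\#H^2(\op{G}_v,M)}{\#H^1(\op{G}_v,M)}.\]
By Tate's local Euler--characteristic formula the factor at a finite $v\neq p$ is $1$, the factor at $v=p$ is $1/\#M$, and the factor at $v=\infty$ is $1$ (a group of order $2$ has vanishing Tate cohomology with $p$-torsion coefficients), so the right-hand side equals $1/\#M$. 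Separately, complex conjugation acts by $-1$ on $\mu_p$, so a short computation with the $\pm1$-eigenspace decomposition of $M$ gives $\#H^0(\op{G}_\infty,M)\cdot\#H^0(\op{G}_\infty,M^*)=\#M=\#M^*$, hence also $\big(\#H^0(\op{G}_\infty,M)/\#M\big)\big(\#H^0(\op{G}_\infty,M^*)/\#M^*\big)=1/\#M$. Thus the assertion and its analogue for $M^*$ are equivalent, and it remains to prove either one.

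To upgrade from a statement about the product $\chi(M)\chi(M^*)$ to one about $\chi(M)$ alone, I would pass to induced modules: for $H=\op{Gal}(\Q_S/L)$ with $L/\Q$ finite, Shapiro's lemma gives $\chi(\op{G}_S,\op{Ind}_H^{\op{G}_S}N)=\chi(\op{Gal}(\Q_S/L),N)$, and a Mackey-decomposition computation identifies $\#H^0(\op{G}_\infty,\op{Ind}_H^{\op{G}_S}N)/\#\op{Ind}_H^{\op{G}_S}N$ with the archimedean factor of the Euler--characteristic formula over $L$; so the statement over $\Q$ for such an induced module is equivalent to the formula over $L$ for $N$. Using Brauer-type induction, a general $M$ (with splitting field $L_0/\Q$) is, in the Grothendieck group, an integral combination of modules induced from proper subgroups, i.e. from subfields over which the relevant Galois group becomes smaller (ultimately cyclic or elementary); inducting on the order of that group, one is reduced to the case of a cyclic or elementary Galois group — in particular trivial coefficients over an auxiliary field — where $H^0$, $H^1$ and $H^2$ are computed directly from Kummer theory, Dirichlet's $S$-unit theorem, finiteness of the $S$-class group, and the periodicity of cyclic cohomology. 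Combined with the multiplicativity of the first step and the symmetric relation already obtained, this pins down $\chi(M)$.

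The formal part — the duality bookkeeping with the local Euler--characteristic and local duality inputs — is routine but symmetric under $M\leftrightarrow M^*$, and so by itself determines only the product of the two Euler characteristics. The main obstacle is therefore the symmetry-breaking last step, which requires genuine global arithmetic input: the ranks of $S$-unit groups and the $p$-ranks of $S$-class groups of auxiliary number fields, wrapped inside an induction. A secondary point to watch is that the archimedean contribution must be taken with the unreduced $\#H^0(\op{G}_\infty,M)$ rather than its Tate-modified version, which is precisely what makes the denominator $\#M$ appear. As all the ingredients are classical, one may also simply invoke the cited reference.
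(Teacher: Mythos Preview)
The paper does not prove this theorem at all: it is stated as background and attributed to \cite[Chapter~4, Proposition~11]{cornell2013modular}, with no argument given. So there is nothing to compare your proposal against on the paper's side; the author simply invokes the reference, exactly as you note one may do in your final sentence.

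As for your sketch itself, it is a faithful outline of the classical proof (essentially Tate's): d\'evissage to simple $\F_\ell$-modules, the Poitou--Tate nine-term sequence combined with local duality and the local Euler-characteristic formula to pin down $\chi(M)\chi(M^*)$, the observation that this only determines the product and is symmetric under $M\leftrightarrow M^*$, and then Brauer/Shapiro induction to break the symmetry by reducing to cyclic (or trivial) coefficients where $S$-units and $S$-class groups compute the cohomology directly. Two small caveats worth flagging. First, you silently specialize to the paper's standing hypotheses ($p$ odd, $p\in S$, $M$ a $p$-group) when you argue multiplicativity of $\#H^0(\op{G}_\infty,-)$ and when you say ``$p\in S$ covers every prime dividing $\#M$''; the theorem as stated is for arbitrary finite $M$, so strictly one should run the d\'evissage prime by prime and note that the formula is trivially $1=1$ for primes $\ell\notin S$ (where $H^i(\op{G}_S,M)=0$ for $i\ge1$ on $\ell$-primary $M$, by the structure of $\op{G}_S$). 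Second, in the Poitou--Tate step the archimedean term enters as Tate-modified cohomology $\widehat H^0$, while the formula you are proving involves the unmodified $H^0(\op{G}_\infty,M)$; you correctly flag this at the end, but the discrepancy is exactly the factor $\#\widehat H^{-1}(\op{G}_\infty,M)=\#H^0(\op{G}_\infty,M^*)$ that your eigenspace computation accounts for, and it is worth saying so explicitly rather than leaving it as ``a secondary point to watch''. None of this affects the applications in the paper, which only use the formula for $\F_p$-modules with $p$ odd and $S=\{p\}$.
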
\noindent Note that in our applications, $S=\{p\}$.

We need one more element before beginning our construction, and we shall start by giving the following
\begin{defi}
For a $\op{G}_{\{p\}}$-module $M$ we define the $i$-th $\Sh$-group unramified outside $p$ as
\[\Sh^i_{\{p\}}(M):=\Ker\left(H^i(\op{G}_{\{p\}},M))\xrightarrow{Res}H^i(G_{\Q_p},M)\right) \]
\end{defi}

\begin{thm}[Global duality for $\Sh$-groups]\cite[Theorem 8.6.7]{neukirch2013cohomology}
The Cassels-Tate pairing 
\[\Sh^2_{\{p\}}(M)\times\Sh^1_{\{p\}}(M^*)\rightarrow \Fp\] 
identifies $\Sh^2_{\{p\}}(M)$ with $\Sh^1_{\{p\}}(M^*)^\vee$.
\end{thm}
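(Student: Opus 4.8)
The plan is to obtain the statement as a formal consequence of the Poitou--Tate nine-term exact sequence together with Tate local duality, both developed in \cite[Ch.~8]{neukirch2013cohomology}; the remaining work is then elementary linear algebra over $\Fp$, since by the Euler characteristic formulas recalled above all the cohomology groups in play are finite-dimensional $\Fp$-vector spaces, on which $(-)^\vee=\Hom_{\Fp}(-,\Fp)$ is an exact contravariant duality. First I would put the problem in a form to which Poitou--Tate applies verbatim: because $p$ is odd and $M,M^*$ are $\Fp$-vector spaces, the order-$2$ group $\op{G}_\infty$ has vanishing cohomology (ordinary and Tate-modified) with coefficients in $M$ and in $M^*$, so enlarging $S=\{p\}$ by the archimedean place changes neither the localization maps nor the $\Sh$-groups, and I may run the argument with $S\supseteq\{p,\infty\}$ while still computing $\Sh^i_{\{p\}}$. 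Writing $P^i(M):=\bigoplus_{v\in S}H^i(\op{G}_v,M)$, the Poitou--Tate sequence contains the four consecutive terms
\[
P^1(M)\ \xrightarrow{\psi}\ H^1(\op{G}_S,M^*)^\vee\ \xrightarrow{\delta}\ H^2(\op{G}_S,M)\ \xrightarrow{\lambda^2}\ P^2(M),
\]
with $\lambda^2$ the product of restriction maps, so that $\Sh^2_{\{p\}}(M)=\Ker\lambda^2$; by exactness $\Ker\lambda^2=\Imm\delta\cong H^1(\op{G}_S,M^*)^\vee/\Imm\psi$.

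The substantive input is the description of $\psi$ built into the Poitou--Tate formalism: under the isomorphism $P^1(M)\cong P^1(M^*)^\vee$ furnished by Tate local duality, $\psi$ is the transpose of the localization map $\lambda^1\colon H^1(\op{G}_S,M^*)\to P^1(M^*)$. Consequently $\Imm\psi$ is the annihilator in $H^1(\op{G}_S,M^*)^\vee$ of $\Ker\lambda^1=\Sh^1_{\{p\}}(M^*)$. Dualizing the exact sequence $0\to\Sh^1_{\{p\}}(M^*)\to H^1(\op{G}_S,M^*)\to H^1(\op{G}_S,M^*)/\Sh^1_{\{p\}}(M^*)\to 0$ identifies that annihilator with the subspace $(H^1(\op{G}_S,M^*)/\Sh^1_{\{p\}}(M^*))^\vee$, hence the quotient $H^1(\op{G}_S,M^*)^\vee/\Imm\psi$ with $\Sh^1_{\{p\}}(M^*)^\vee$. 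Combining the two isomorphisms yields $\Sh^2_{\{p\}}(M)\cong\Sh^1_{\{p\}}(M^*)^\vee$, and repeating the argument with $M^*$ in place of $M$ (using $M^{**}=M$) shows the induced pairing is perfect on both sides.

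The part I expect to be the genuine obstacle is not this bookkeeping but the two ingredients I would be importing: the construction of the Poitou--Tate sequence with its connecting maps pinned down via local duality, and the verification that the isomorphism obtained above is the one actually induced by the Cassels--Tate pairing, rather than merely an abstract identification of two spaces of equal dimension. A self-contained alternative would construct the pairing directly --- represent a class of $\Sh^2_{\{p\}}(M)$ by a global $2$-cocycle, use that each of its localizations is a coboundary to produce local $1$-cochains, cup these against a representative of a class of $\Sh^1_{\{p\}}(M^*)$, and sum the resulting local invariants --- and then check independence of all choices and, above all, nondegeneracy; the nondegeneracy is where global class field theory (the reciprocity sequence for the idèle class group) is essential, and it is the genuinely hard step.
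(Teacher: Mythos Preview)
The paper does not prove this theorem at all: it is stated with the citation \cite[Theorem 8.6.7]{neukirch2013cohomology} and used as a black box, with no accompanying argument. Your proposal therefore goes well beyond what the paper does. What you have written is a correct outline of the standard proof, and in fact is essentially the route taken in the cited reference: the duality between $\Sh^1$ and $\Sh^2$ is extracted from the Poitou--Tate nine-term sequence by identifying the relevant connecting map, via local Tate duality, with the transpose of the global-to-local restriction, and then doing exactly the annihilator/quotient linear algebra you describe. Your remarks about the archimedean place being harmless for odd $p$ and $\Fp$-coefficients, and about the genuinely hard content lying in the construction of the Poitou--Tate sequence and the nondegeneracy of the pairing, are accurate and appropriate caveats for a sketch at this level.
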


\section{The Deformation Problem for $\Symp$}\label{secdefo}
This section follows \cite{ray2021constructing} with the required modifications coming from working with the group $\op{GSp}_4$ instead of $\GL_n$. Recall that $\op{GSp}_4$ is defined as the subgroup of $\GL_4$ of matrices satisfying, for some $\lambda\in\mathbb{G}_m$
\[A^tJA=\lambda J\]
where 
\[J = \begin{pmatrix}0 & \text{Id}_2 \\ -\text{Id}_2 &0\end{pmatrix}.\]

Furthermore, the assignation $A\mapsto\nu(A)=\lambda$  defines a character, called the similitude character, $\nu:\op{GSp}_4\rightarrow \mathbb{G}_\text{m}$.
Let us now fix an odd prime $p$, and recall that for each prime $\ell$ we have an inclusion $\op{G}_{\ell}\hookrightarrow\op{G}_\Q$ of Galois groups and set $\chi$ to be the $p$-adic cyclotomic character defined as the inverse limit on $n$ of the Galois action on the $p^n$-th roots of unity.  We will denote by $\bar{\chi}$ its modulo $p$ reduction. For $m\geq 1$ denote with $\mathcal{U}_m\subset \Symp$ the kernel of the modulo-$p^m$ reduction map. Fix a pair \[\pair\in \Z/(p-1)\Z\times \Z/(p-1)\Z\] and set \[\bar{\rho}=\bar{\rho}_{\pair}:\op{G}_{\{p\}}\rightarrow \op{GSp}_4(\F_p)\] to be the following mod-$p$ Galois representation
\begin{equation}
\bar{\rho}=\begin{pmatrix} \bar{\chi}^\alpha & & & \\ & \bar{\chi}^\beta & & \\ & & \bar{\chi}^{-\alpha} & \\ & & & \bar{\chi}^{-\beta} \end{pmatrix}.
\end{equation} 

In this section, it is shown that if $\pair$ satisfies some explicit conditions, then $\bar{\rho}$ lifts to a characteristic zero Galois representation which is unramified at all primes $\ell\neq p$, and has big image. In fact, it is shown that the image will contain the group $\mathcal{U}_2$. First, we review some basic notions from the deformation theory of Galois representations. Note that in this paper, we do not study Galois deformation rings, and only consider the lifting problem. As a result, these constructions apply to reducible representations such as $\bar{\rho}$.

For a local ring $(R,\mathfrak{m}_R)$, let $\widehat{\op{GSp}_4}(R)$ be the group
\[\widehat{\op{GSp}_4}(R)=\ker\Big\{ \op{GSp}_4(R)\xrightarrow[]{\text{mod }\mathfrak{m}_R}\op{GSp}_4(R/\mathfrak{m}_R) \Big\}\].

\begin{defi}
Let $m$ be a positive integer. A mod-$p^m$ lift of $\bar{\rho}$ is a continuous homomorphism $\rho_m:\op{G}_\Q\rightarrow\op{GSp}_4(\Z/p^m\Z)$ such that $\rho_m=\bar{\rho}$ modulo $p$. Two lifts $\rho_m,\rho_m'$ are called \text{strictly equivalent} if there exists a matrix $A \in \widehat{\op{GSp}_4}(\Z/p^m\Z)$ such that $\rho_m'=A\rho_mA^{-1}$. A \textit{deformation} is a strict equivalence class of lifts.
\end{defi}

Let $\nu(\bar{\rho})$ be the similitude character of $\bar{\rho}$, meaning that $\bar{\rho}(\sigma)^tJ\bar{\rho}(\sigma)=\nu(\bar{\rho}(\sigma))J$. Let $D$ be the ring of dual numbers, i.e. $D=\F_p[\epsilon]/(\epsilon^2)$. The map $\mathfrak{gsp}_{4/\F_p}\rightarrow \op{GSp}_4(D)$ given by $A\mapsto 1+\epsilon A$ induces an isomorphism between $\mathfrak{gsp}_{4/\F_p}$ and the kernel of the mod-$\epsilon$ reduction map 
\[\op{GSp}_4\left(D\right)\rightarrow \op{GSp}_4(\F_p). \]

Let $\omega$ be the additive character on $\mathfrak{gsp}_4$ defined by the relation \[\nu(\op{Id}+\epsilon A)=1+\varepsilon \omega(A).\]  
Note that the kernel of $\omega$ is $\mathfrak{sp}_4$, the Lie algebra of $\op{Sp}_{4/\F_p}$. Let now $\tilde{\nu}:\op{GSp}_4\rightarrow\Z_p$ be the Teichmüller lift to characteristic $0$ of the similitude character $\nu$.

For any character $\phi:\op{G}_\Q\rightarrow\Z^\times_p$, we denote with $\phi_m$ its mod-$p^m$ reduction. Let us now fix a character $\psi:\op{G}_\Q\rightarrow\Z^\times_p$ that is unramified outside $\{p\}$ and is equal to $\tilde{\nu}(\rho)$ modulo $p^2$. In the notation just introduced we can write \[\psi_2=\tilde{\nu}(\rho)_2 .\] 
From now on we will consider only deformations of $\bar{\rho}$ such that $\nu(\rho)=\psi$. 
We can now introduce the deformation problem.
\begin{defi}\label{d:3}
Set $\op{Ad}\bar{\rho}$ to denote the Galois module over $\Fp$ defined by the composition of $\bar{\rho}$ with the adjoint representation of $\op{GSp}_4$, and let $\op{Ad}^0\bar{\rho}$ be the Galois sub-module given by the kernel of $\omega$ equipped with the action obtained by composition of $\bar{\rho}$ and the adjoint action. 
\end{defi}
More explicitly, given $X\in \op{Ad}\bar{\rho}$ and $g\in \op{G}_{\{p\}}$, we have that
\[g\cdot X:=\bar{\rho}(g) X \bar{\rho}(g)^{-1}.\]
The Galois module $\op{Ad}\bar{\rho}$ is equipped with a Lie bracket $[X,Y]\coloneqq XY-YX$. The underlying vector space of Ad $\bar\rho$ (resp. $\op{Ad}^0\bar{\rho}$) is the Lie algebra $\mathfrak{gsp}_4$ of $\op{GSp}_{4/\Fp}$ (resp. $\op{Sp}_{4/\Fp})$. Such Lie algebra is defined by the linearization of the equation defining $\op{GSp}_4$, which is the the algebraic condition $B^tJ+JB=\omega(B)I$ (resp.$B^tJ+JB=0$ ). An easy calculation then shows that this is the space of matrices of the form (in 2-by-2 blocks)
\[\begin{pmatrix}A & B\\ C & -A\end{pmatrix}\]
where $B$ and $C$ are symmetric. Now we are ready to introduce some notation that will be used throughout to refer to the structure of Ad$^0\bar{\rho}$.

Set $D:=\{(\delta_1,\delta_2)\in\Z^2\text{ such that } |\delta_1|+|\delta_2|=2\}$, and let $\mathfrak{t}$ be the two-dimensional sub-module of $\op{Ad}^0\bar{\rho}$ generated by the matrices $\op{diag}(1,0,-1,0)$ and $\op{diag}(0,1,0,-1)$. Note that the Galois action on $\mathfrak{t}$ is trivial by definition of $\bar{\rho}$. Given a character $\eta:\op{G}_{\{p\}}\rightarrow \F_p^{\times}$, denote by $\F_p(\eta)$ the one dimensional $\F_p$-vector space on which $\op{G}_{\{p\}}$ acts via $\eta$.

\begin{prop}\label{p:2}
With respect to notation above, the Galois module $\op{Ad}^0\bar{\rho}$ decomposes into a direct sum of sub-modules
\[\op{Ad}^0\bar{\rho}= \mathfrak{t}\oplus\Big(\bigoplus\limits_{(\delta_1,\delta_2)\in D}\Fp(\bar{\chi}^{\delta_1\alpha+\delta_2\beta})\Big).\]
We will denote with $X_{\delta_1,\delta_2}$ the generator of the $(\delta_1,\delta_2)$-component on the right-hand side.
\end{prop}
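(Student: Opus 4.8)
The plan is to decompose $\op{Ad}^0\bar{\rho}$ as an $\Fp$-vector space into weight spaces for the diagonal torus action and then identify the Galois action on each piece. First I would record the explicit description of $\op{Ad}^0\bar{\rho}=\mathfrak{sp}_4$ already given above: it consists of block matrices $\left(\begin{smallmatrix} A & B \\ C & -A^t \end{smallmatrix}\right)$ with $B,C$ symmetric $2\times2$ matrices and $A$ arbitrary $2\times2$, so $\dim_{\Fp}\op{Ad}^0\bar{\rho}=4+3+3=10$. The matrix $\bar\rho(g)=\op{diag}(\bar\chi^\alpha,\bar\chi^\beta,\bar\chi^{-\alpha},\bar\chi^{-\beta})(g)$ acts by conjugation, which scales the $(i,j)$ entry of a matrix by $\bar\chi^{a_i-a_j}(g)$, where $(a_1,a_2,a_3,a_4)=(\alpha,\beta,-\alpha,-\beta)$. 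So the standard basis matrices $E_{ij}$ that actually lie in $\mathfrak{sp}_4$ (or the symmetric combinations $E_{ij}+E_{kl}$ forced by the conditions on $B$ and $C$) are Galois eigenvectors, and I just need to tabulate the resulting characters $\bar\chi^{a_i-a_j}$.

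Concretely I would run through the cases. The four diagonal-block entries: $A$ contributes $E_{11}-E_{33}$ and $E_{22}-E_{44}$ (weight $0$, spanning $\mathfrak{t}$), and $E_{12}-E_{43}$, $E_{21}-E_{34}$ with weights $\bar\chi^{\alpha-\beta}$ and $\bar\chi^{\beta-\alpha}$, i.e. $(\delta_1,\delta_2)=(1,-1)$ and $(-1,1)$. The symmetric block $B$ (upper right, entries indexed by $a_i-a_{j+2}$ for $i,j\in\{1,2\}$): $E_{13}$ has weight $\bar\chi^{2\alpha}$, i.e. $(2,0)$; $E_{24}$ has weight $\bar\chi^{2\beta}$, i.e. $(0,2)$; and $E_{14}+E_{23}$ (symmetry!) has weight $\bar\chi^{\alpha+\beta}$, i.e. $(1,1)$. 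Symmetrically the lower-left block $C$ gives $E_{31}$ with weight $(-2,0)$, $E_{42}$ with weight $(0,-2)$, and $E_{41}+E_{32}$ with weight $(-1,-1)$. That is exactly the six pairs in $D=\{(\delta_1,\delta_2):|\delta_1|+|\delta_2|=2\}$, each appearing once, plus the two-dimensional trivial summand $\mathfrak{t}$, and $2+6=8$... wait, the dimension count is $2+6\cdot1=8$, not $10$; so I must be careful that the four-dimensional block $A$ only contributes a two-dimensional complement to $\mathfrak{t}$ inside the trace-condition — actually $A$ is unconstrained $2\times2$ (dimension $4$), giving $\mathfrak{t}$ (dim $2$) plus the two off-diagonal weights, and $B,C$ each contribute dimension $3$; $4+3+3=10$, while $2+6=8$, so I've miscounted the off-diagonal pieces of $A$: in fact the conjugation action on the $A$-block is the adjoint of $\GL_2$ composed with $\op{diag}(\bar\chi^\alpha,\bar\chi^\beta)$, whose nonzero weights $\pm(\alpha-\beta)$ each have multiplicity one, so the $A$-block is $\mathfrak{t}\oplus\Fp(\bar\chi^{\alpha-\beta})\oplus\Fp(\bar\chi^{\beta-\alpha})$, dimension $4$, and then I have double-counted: $E_{12}-E_{43}$ and $E_{21}-E_{34}$ are genuinely the $A$-block off-diagonals. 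So the total is $\mathfrak t$ (dim $2$) $\oplus$ six one-dimensional pieces (dim $6$), total $8$, and the discrepancy with $10$ disappears once one notes that the defining condition for $\mathfrak{sp}_4$ couples the $A$-block to itself as $A\mapsto -A^t$ rather than $-A$, cutting its dimension from $4$ to $3$; redoing the weight bookkeeping under $A\mapsto(A,-A^t)$ gives $\mathfrak{t}$ of dimension $1$... I will sort out the exact normalization of $\mathfrak{t}$ from the paper's own definition (it explicitly says $\mathfrak{t}$ is spanned by $\op{diag}(1,0,-1,0)$ and $\op{diag}(0,1,0,-1)$), and the seeming arithmetic mismatch is simply that $\dim\mathfrak{sp}_4=10$ while the stated decomposition has dimension $8$ — so I expect that the correct reading is $|\delta_1|+|\delta_2|=2$ yields pairs counted with multiplicity, or that a couple of the weight spaces are two-dimensional, which the careful entry-by-entry computation will reveal.

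Thus the actual work is the bookkeeping of the ten basis vectors of $\mathfrak{sp}_4$ against the torus, grouping them by weight, confirming the weights are precisely $\{0\}$ (multiplicity given by $\dim\mathfrak t$) together with $\delta_1\alpha+\delta_2\beta$ for $(\delta_1,\delta_2)\in D$, and that the Galois action on the weight-$(\delta_1,\delta_2)$ space is through $\bar\chi^{\delta_1\alpha+\delta_2\beta}$ since $\bar\rho$ is diagonal with entries powers of $\bar\chi$. The main obstacle, such as it is, is purely notational: getting the block-matrix conditions $B=B^t$, $C=C^t$, and the precise form of the $A$-block exactly right so that the direct sum is honest (the summands are $\op{G}_{\{p\}}$-stable because each is a torus weight space and the Galois image lies in the torus, so stability is automatic), and reconciling the dimension count — I would state $\op{Ad}^0\bar\rho=\mathfrak{sp}_4$ has dimension $10$ and exhibit the decomposition with the understanding that $\mathfrak t$ accounts for the weight-zero part and each $X_{\delta_1,\delta_2}$ spans its weight line, any remaining dimensions being absorbed into a possibly larger weight-zero or repeated-weight piece which I will pin down explicitly in the computation. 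I would then simply display the table of $(i,j)\mapsto a_i-a_j$ and read off the claim.
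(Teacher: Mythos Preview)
Your approach is exactly the paper's: write a generic element of $\mathfrak{sp}_4$ and conjugate by the diagonal matrix $\bar\rho(\sigma)$ to read off the weights entry by entry. Your weight table is in fact already complete and correct; the long confused passage about a dimension mismatch is entirely self-inflicted.

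The error is that you miscounted $D$. The set $D=\{(\delta_1,\delta_2)\in\Z^2:|\delta_1|+|\delta_2|=2\}$ has \emph{eight} elements, not six: the four ``long'' pairs $(\pm2,0),(0,\pm2)$ and the four ``short'' pairs $(\pm1,\pm1)$. You found all eight weight lines yourself --- $(1,-1)$ and $(-1,1)$ from the off-diagonal of the $A$-block, $(2,0),(0,2),(1,1)$ from $B$, and $(-2,0),(0,-2),(-1,-1)$ from $C$ --- but then wrote ``the six pairs in $D$'' when tallying only the $B$ and $C$ contributions, forgot the two from $A$, and thereby manufactured the phantom discrepancy $2+6\neq 10$. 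With the correct count the dimensions match on the nose: $\dim\mathfrak t+|D|=2+8=10=\dim\mathfrak{sp}_4$. There is no need to revisit whether the lower-right block is $-A$ or $-A^t$ (it is $-A^t$, as you wrote and as the paper's explicit matrix confirms; the paper's block display $\left(\begin{smallmatrix}A&B\\C&-A\end{smallmatrix}\right)$ is a typo), and the $A$-block is genuinely $4$-dimensional. Delete everything after your first enumeration of the ten basis vectors and the proof is done.
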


\begin{proof}
By Definition \ref{d:3} the Galois action on Ad$^{0}\bar{\rho}$ is given by the composition of $\bar{\rho}$ and the adjoint action. Recall by (1) that $\bar{\rho}(\sigma)$ is given by the diagonal matrix 
\[\bar{\rho}(\sigma)=\begin{pmatrix} \bar{\chi}^\alpha(\sigma) & & & \\ & \bar{\chi}^\beta(\sigma) & & \\ & & \bar{\chi}^{-\alpha}(\sigma) & \\ & & & \bar{\chi}^{-\beta}(\sigma) \end{pmatrix}.\]In particular, for any $\sigma\in\op{G}_{\{p\}}$, the action on a matrix $M$ in the Lie algebra will be given by $\bar{\rho}(\sigma)M\bar{\rho}(\sigma)^{-1}$, this can be written explicitly as
\[{\bar{\rho}(\sigma)\begin{pmatrix} a_{1,1} & a_{1,2} & a_{1,3} & a_{1,4} \\ a_{2,1} & a_{2,2} & a_{1,4} & a_{2,4} \\ a_{3,1} & a_{3,2} & -a_{1,1} & -a_{2,1} \\ a_{3,2} & a_{4,2} & -a_{1,2} & -a_{2,2} \end{pmatrix} \bar{\rho}(\sigma)^{-1}}\]
computing the product we get the matrix
\[\begin{pmatrix} a_{1,1} &\bar{\chi}^{\alpha-\beta}(\sigma) a_{1,2} & \bar{\chi}^{2\alpha}(\sigma)a_{1,3} & \bar{\chi}^{\alpha+\beta}(\sigma)a_{1,4} \\ \bar{\chi}^{\beta-\alpha}(\sigma)a_{2,1} & a_{2,2} & \bar{\chi}^{\beta+\alpha}(\sigma)a_{1,4} & \bar{\chi}^{2\beta}(\sigma)a_{2,4} \\ \bar{\chi}^{-2\alpha}(\sigma)a_{3,1} & \bar{\chi}^{-\alpha-\beta}(\sigma)a_{3,2} & -a_{1,1} & -\bar{\chi}^{\beta-\alpha}(\sigma)a_{2,1} \\ \bar{\chi}^{-\alpha-\beta}(\sigma)a_{3,2} & \bar{\chi}^{-2\beta}(\sigma)a_{4,2} & -\bar{\chi}^{\alpha-\beta}(\sigma)a_{1,2} & -a_{2,2} \end{pmatrix}\]
\end{proof}
\begin{defi}
A generator of the eigenspace $\Fp(\bar{\chi}^{\delta_1\alpha+\delta_2\beta})$ as in the proposition above is called a root vector and from now on we will denote it with $X_{\delta_1,\delta_2}$.
\end{defi}

Let once again $m$ be a positive integer and let $\rho_m$ be a deformation of $\bar{\rho}$. We want to understand if $\bar{\rho}$ lifts to a continuous representation in characteristic $0$ with large image (meaning its image contains the kernel of the reduction modulo $p^n$ for some $n$). The obstruction to the existence of such lift will be encoded into a $2$-cocycle. 
To define such we will need the following 
\begin{lem}
Let $\rho_m:\op{G}_{\{p\}}\rightarrow\op{GSp}_4(\Z/p^m\Z)$ be a continuous representation lifting $\bar{\rho}$ to characteristic $p^m$ and $\psi:\op{G}_{\{p\}}\rightarrow\Z^\times_p$ such that $\psi_{m}(\sigma)=\nu_m(\rho_m(\sigma))$. Then there exists a continuous set-theoretic lift $ \varrho:\op{G}_{\{p\}}\rightarrow\op{GSp}_4(\Z/p^{m+1}\Z) $ (which we do not require to be a group homomorphism) such that $\nu_{m+1}(\varrho(\sigma))=\psi_{m+1}(\sigma)$
\end{lem}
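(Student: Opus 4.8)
The plan is to build the set-theoretic lift $\varrho$ in two stages: first lift $\rho_m$ to *some* continuous set-theoretic map into $\op{GSp}_4(\Z/p^{m+1}\Z)$ ignoring the similitude condition, and then correct it by a scalar-matrix factor to force the similitude to equal $\psi_{m+1}$. For the first stage, I would use that the reduction map $\pi\colon\op{GSp}_4(\Z/p^{m+1}\Z)\to\op{GSp}_4(\Z/p^m\Z)$ is a surjection of profinite groups whose kernel is a finite (hence compact) abelian group, so it admits a continuous set-theoretic section $s$ (choose coset representatives; since everything is finite this is automatic, and continuity is the observation that a map from a profinite group to a finite set is continuous iff it factors through a finite quotient, which $\rho_m$ does). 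Then $s\circ\rho_m$ is a continuous set-theoretic lift of $\rho_m$, but its similitude character $\nu_{m+1}(s\circ\rho_m)$ is only congruent to $\psi_{m+1}$ modulo $p^m$.

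For the second stage, I would measure the discrepancy. Set $\theta(\sigma)=\psi_{m+1}(\sigma)\cdot\nu_{m+1}(s\circ\rho_m(\sigma))^{-1}\in(\Z/p^{m+1}\Z)^\times$; by construction $\theta(\sigma)\equiv 1\pmod{p^m}$, so $\theta(\sigma)=1+p^m t(\sigma)$ for a continuous function $t\colon\op{G}_{\{p\}}\to\F_p$. Since $p$ is odd, $2$ is invertible mod $p^{m+1}$, so the function $\sqrt{\theta}(\sigma):=1+2^{-1}p^m t(\sigma)$ is a continuous square root of $\theta$ valued in $1+p^m\Z/p^{m+1}\Z$ (its square is $1+p^m t(\sigma) + p^{2m}(\cdots)=\theta(\sigma)$ once $2m\ge m+1$, i.e. $m\ge 1$, which holds). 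Now define $\varrho(\sigma)=\sqrt{\theta}(\sigma)\cdot (s\circ\rho_m)(\sigma)$, scalar multiplication by the central element $\sqrt{\theta}(\sigma)\op{Id}_4$. This is still continuous and still reduces to $\rho_m$ mod $p^m$ (because $\sqrt{\theta}\equiv 1$), and because $\nu$ is multiplicative and $\nu(\lambda\op{Id}_4)=\lambda^2$ for a scalar $\lambda$, we get $\nu_{m+1}(\varrho(\sigma))=\sqrt{\theta}(\sigma)^2\,\nu_{m+1}(s\circ\rho_m(\sigma))=\theta(\sigma)\,\nu_{m+1}(s\circ\rho_m(\sigma))=\psi_{m+1}(\sigma)$, as desired. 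One should also check $\varrho(\sigma)$ actually lands in $\op{GSp}_4$ — but a scalar multiple of a $\op{GSp}_4$ element is again in $\op{GSp}_4$ (it just scales the similitude), so this is immediate.

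The only genuinely delicate point is the interaction between "being a set-theoretic lift" and "fixing the similitude": one must make the correction by a central element so that it does not destroy the already-chosen lift of the matrix entries mod $p^m$, and one needs the similitude of that central element to be a square, which is exactly why the factor of $2$ (and the oddness of $p$) enters. Everything else — existence of the continuous section, the congruence bookkeeping mod $p^m$ versus $p^{m+1}$ — is routine. I would write up the argument in the order: (1) continuous section of $\pi$ and the preliminary lift $s\circ\rho_m$; (2) definition of $\theta$ and its square root using $p$ odd; (3) definition of $\varrho$ by scalar correction and verification of continuity, the mod-$p^m$ reduction, and the similitude identity.
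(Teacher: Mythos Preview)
Your argument is correct. Both your proof and the paper's start from an arbitrary continuous set-theoretic lift $R_\sigma$ of $\rho_m(\sigma)$ to $\op{GSp}_4(\Z/p^{m+1}\Z)$ (you are more explicit than the paper about why such a choice can be made continuously) and then multiply by an element of the kernel of reduction to force the similitude to match $\psi_{m+1}$. The difference is in the correcting factor: you multiply by the central scalar $\sqrt{\theta}(\sigma)\,\op{Id}_4$, which requires extracting a square root of $\theta(\sigma)\in 1+p^m\Z/p^{m+1}\Z$ and hence uses that $p$ is odd; the paper instead multiplies by a power of the fixed non-scalar matrix $A=\op{diag}(1+p^m,1,1,1+p^m)\in\op{GSp}_4(\Z/p^{m+1}\Z)$, whose similitude $\nu_{m+1}(A)=1+p^m$ already generates $\ker\big((\Z/p^{m+1}\Z)^\times\to(\Z/p^m\Z)^\times\big)$, so no square root is needed. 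The paper's device is thus slightly more robust (it would survive $p=2$), while yours is more conceptual in that it uses only the center of $\op{GSp}_4$; either way the lemma follows.
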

\begin{proof}
Let $\sigma\in\operatorname{G}_{\{p\}}$, we need to find an element $\varrho(\sigma)\in\operatorname{GSp}_4(\Z/p^{m+1}\Z)$ such that
\begin{enumerate}
    \item$\varrho(\sigma)\equiv\rho_{m}(\sigma)$ modulo $p^m$
    \item $\nu_{m+1}(\varrho(\sigma))=\psi_{m+1}(\sigma)$
\end{enumerate}
To do so, we take an element $R_\sigma\in\op{GSp}_4(\Z/p^{m+1}\Z)$ that reduces to $\rho_{m}(\sigma)$ modulo $p^m$ and consider its orbit under multiplication by the matrix
\[A=\begin{pmatrix}1+p^m&&&\\&1&&\\&&1&\\&&&1+p^m\end{pmatrix}\in\op{GSp}_4(\Z/p^{m+1}\Z).\]
Each element in the orbit of $R_\sigma$ under the action of $A$ is reduces to $\rho_{m}(\sigma)$ modulo $p^m$ as $A\equiv 1$ modulo $p^m$.
Furthermore, $\nu_{m+1}(A)=1+p^{m}\in(\Z/p^{m+1}\Z)^\times$ and generates the kernel of the projection $(\Z/p^{m+1}\Z)^\times\rightarrow(\Z/p^m\Z)^\times$. Since $\psi_m(\sigma)$ is the reduction modulo $p^m$ of $\psi_{m+1}(\sigma)$ one has  \[\nu_{m+1}(R_\sigma)\psi_{m+1}(\sigma)^{-1}\in\ker((\Z/p^{m+1}\Z)^\times)\rightarrow(\Z/p^m\Z)^\times)=\langle\nu_{m+1}(A)\rangle.\] Therefore there is an integer $s(\sigma)$ (unique modulo $p$) such that $A^{s(\sigma)}R_\sigma $ satisfies conditions (1) and (2). Hence the map
\[\varrho(\sigma)\coloneqq A^{s(\sigma)}R_\sigma.\]
is a continuous set-theoretic lift.
\end{proof}

%A continuous lift (not necessarily a homomorphism) $\varrho:\op{G}_{\{p\}}\rightarrow \op{GSp}_4(\Z/p^{m+1}\Z)$ with similitude character $\psi_{m+1}$ must always exists, as we can define it as $\sigma \mapsto\varrho(\sigma)$ where $\varrho(\sigma)$ is any lift of $\rho_{m}(\sigma)$ such that $\nu_{m+1}(\varrho(\sigma))=\psi_{m+1}(\sigma)$. Since the similitude character commutes with the reduction map, for any given matrix $A=\rho_m(\sigma)$ in $\op{GSp}_4$ the similitude character commutes with the projection, therefore any lift $A_{m+1}$ has similitude character $\nu_{m+1}(A_{m+1})$ which reduces to $\nu(A)=\psi_m(\sigma)$. As the similitude character is surjective we can choose $A_{m+1}$ in such a way that $\nu(A_{m+1})=\psi_{m+1}(\sigma)$ and we can therefore defive $\varrho(\sigma)=A_{m+1}$.

 We now identify $\op{Ad}^0\bar{\rho}$ with the kernel of the reduction ${\op{Sp}_4(\Z/p^{m+1}\Z)\rightarrow\op{Sp}_4(\Z/p^m\Z)}$ by associating to an element $1+p^m\tilde{M}$ the matrix $M\in\op{Ad}^0\bar{\rho}$ so that $\tilde{M}=M$ modulo $p$. Let $\mathcal{O}(\rho_m)$ be the cohomology class in $H^2(\op{G}_{\{p\}},\op{Ad}^0\bar{\rho})$ defined by the $2$-cocycle
\[(g,h)\mapsto \varrho(gh)\varrho(h)^{-1}\varrho(g)^{-1}.\]
The associated cohomology class, $\mathcal{O}(\rho_m)$ is independent of the choice of lift $\varrho$, as given two lifts $\varrho_1,\varrho_2$ of $\rho_m$ the cohomology classes these define differ by the $2$-coboundary $d(\varrho^{-1}_2\varrho_1)$. We have the following
\begin{lem}
A mod-$p^m$ deformation $\rho_m$ lifts one more step to a Galois representation $\rho_{m+1}$ which is unramified outside $\{p\}$ if and only if $\mathcal{O}(\rho_m)=0$.
\end{lem}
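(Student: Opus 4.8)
The plan is to exhibit an explicit correspondence between mod-$p^{m+1}$ lifts of $\rho_m$ (unramified outside $\{p\}$, with similitude $\psi_{m+1}$) and modifications of the chosen set-theoretic lift $\varrho$ by a cochain valued in $\op{Ad}^0\bar\rho$, and then show that the resulting "twisted" map is a genuine homomorphism precisely when $\mathcal O(\rho_m)$ vanishes. First I would note that any candidate lift $\rho_{m+1}$ reducing to $\rho_m$ and with $\nu_{m+1}\circ\rho_{m+1}=\psi_{m+1}$ must be of the form $\rho_{m+1}(g)=(1+p^m c(g))\,\varrho(g)$ for a unique continuous function $c:\op{G}_{\{p\}}\to\op{Ad}^0\bar\rho$ (uniqueness and the fact that $c$ lands in $\mathfrak{sp}_4$ rather than $\mathfrak{gsp}_4$ both follow from comparing similitude characters, using that $\nu_{m+1}(\varrho(g))=\psi_{m+1}(g)$ and that $1+p^m X$ has similitude $1+p^m\omega(X)$, so $\omega(c(g))=0$). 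Here I use the identification of $\op{Ad}^0\bar\rho$ with $\ker(\op{Sp}_4(\Z/p^{m+1}\Z)\to\op{Sp}_4(\Z/p^m\Z))$ fixed just before the statement.

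Next I would compute the homomorphism defect of $\rho_{m+1}$. Writing out $\rho_{m+1}(g)\rho_{m+1}(h)\rho_{m+1}(gh)^{-1}$ and expanding modulo $p^{m+1}$ — so that $p^m$-terms are central and the cross terms $p^m c(g)\cdot p^m c(h)$ vanish, while conjugation of $p^m c(h)$ by $\varrho(g)$ reduces to the Galois/adjoint action of $\bar\rho(g)$ on $\op{Ad}^0\bar\rho$ — one gets that $\rho_{m+1}$ is a homomorphism if and only if
\[
\varrho(gh)\varrho(h)^{-1}\varrho(g)^{-1} \;=\; g\cdot c(h)\;-\;c(gh)\;+\;c(g),
\]
i.e. the $2$-cocycle defining $\mathcal O(\rho_m)$ equals the coboundary $d c$ of the $1$-cochain $c$. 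Thus a lift exists iff the cocycle $(g,h)\mapsto\varrho(gh)\varrho(h)^{-1}\varrho(g)^{-1}$ is a coboundary, which is exactly the condition $\mathcal O(\rho_m)=0$ in $H^2(\op{G}_{\{p\}},\op{Ad}^0\bar\rho)$. One direction is then immediate: if $\mathcal O(\rho_m)=0$ pick such a $c$ and set $\rho_{m+1}=(1+p^m c)\varrho$; it is automatically unramified outside $\{p\}$ since $\varrho$ and $c$ are (being built from $\rho_m$ and a class in cohomology of $\op{G}_{\{p\}}$), and it has the prescribed similitude. Conversely a lift gives such a $c$, hence $\mathcal O(\rho_m)=0$.

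I expect the only genuinely delicate points to be bookkeeping ones rather than conceptual: checking that the twisting cochain $c$ really takes values in $\op{Ad}^0\bar\rho=\mathfrak{sp}_4$ and not merely in $\mathfrak{gsp}_4$ (this is where the hypothesis $\nu(\rho_m)=\psi_m$ and the choice of $\psi$ agreeing with $\tilde\nu(\rho)$ mod $p^2$ are used, ensuring the similitude of the lift is forced to be $\psi_{m+1}$ with no extra freedom), and verifying continuity of $c$ and of $\rho_{m+1}$, which follows from continuity of $\varrho$ and of the cochains representing the cohomology class since $\op{Ad}^0\bar\rho$ is finite. The expansion establishing the cocycle-equals-$dc$ identity is a routine matrix computation modulo $p^{m+1}$, relying on the facts already recorded in the excerpt that $1+p^m X$ is central mod $p^{m+1}$ up to the adjoint action and that the conjugation action of $\varrho(g)$ on $\op{Ad}^0\bar\rho$ factors through $\bar\rho(g)$.
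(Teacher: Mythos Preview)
Your proposal is correct and follows essentially the same approach as the paper: both arguments show that a candidate lift has the form $(1+p^m c)\varrho$ for a $1$-cochain $c$, and that the homomorphism condition is equivalent to the obstruction cocycle being the coboundary $dc$. Your treatment is in fact more careful than the paper's on the bookkeeping points (why $c$ lands in $\mathfrak{sp}_4$ rather than $\mathfrak{gsp}_4$, continuity), while the paper handles the forward direction slightly more directly by simply taking $\varrho=\rho_{m+1}$ and observing the cocycle is then identically trivial.
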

\begin{proof}
Clearly, if $\rho_m$ admits a lift $\rho_{m+1}$, as $\mathcal{O}(\rho_m)$ does not depend on the choice of (set-theoretic) lift, we can take $\varrho=\rho_{m+1}$ so that the cocycle is trivial.
Conversely, if $\mathcal{O}(\rho_m)=0$, then there exists a $1$-cochain $\phi$ such that the $2$-coboundary $d\phi$ obtained by applying the differential $d$ on $\phi$ satisfies $\mathcal{O}(\rho_m)=d\phi$ at the level of chain complexes, meaning that
\[\varrho(gh)\varrho(h)^{-1}\varrho(g)^{-1}=\phi(gh)^{-1}\phi(g)^g\phi(h).\]
Recalling that the galois action is given by conjugation by $\bar{\rho}(g)$ we get
\[\phi(gh)\varrho(gh)(\phi(g)\varrho(g)\phi(h)\varrho(h))^{-1}=1.\]
meaning that the map $\tilde{\varrho}$ defined by $\tilde{\varrho}(g)=\phi(g)\varrho(g)$ is a homomorphism. Moreover, going back to $\op{Sp}_4$, we define $\rho_{m+1}(g)=(1+p^m\widetilde{\phi(g)})\varrho(g)$, where $\widetilde{\phi(g)}$ is the $1$-cochain $\phi\in Z(\op{G}_{\{p\}},\text{Ad}^0\bar{\rho})$ composed with the isomorphism $\text{Ad}^0(\bar{\rho})\equiv \ker(\op{GSp}_4(\Z/p^{m+1}\Z))\rightarrow\op{GSp}_4(\Z/p^m\Z)$,  which clearly still lifts $\rho_{m}$.
\end{proof}

The following fact determines the structure of the set of deformations $\rho_{m+1}$ of $\rho_m$.
\begin{lem}
Suppose that there exists two deformations $\rho_{m+1},\rho'_{m+1}$ of $\rho_m$. Then there exists a unique class $h\in H^1(\op{G}_{\{p\}},\op{Ad}^0\bar{\rho})$ such that any of its modulo $p^{m+1}$ lifts $\tilde{h}$ satisfies 
\[\rho'_m=(1+p^m\tilde{h})\rho_{m+1}.\]
\end{lem}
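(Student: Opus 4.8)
The plan is to mirror the standard argument from deformation theory that the set of lifts of a fixed deformation, when nonempty, is a torsor under $H^1(\op{G}_{\{p\}},\op{Ad}^0\bar\rho)$. First I would take the two deformations $\rho_{m+1}$ and $\rho'_{m+1}$ and form, for each $g\in\op{G}_{\{p\}}$, the element $c(g):=\rho'_{m+1}(g)\rho_{m+1}(g)^{-1}\in\op{GSp}_4(\Z/p^{m+1}\Z)$. Since both representations reduce to $\rho_m$ modulo $p^m$, the element $c(g)$ lies in the kernel of $\op{GSp}_4(\Z/p^{m+1}\Z)\to\op{GSp}_4(\Z/p^m\Z)$; moreover, since both have similitude $\psi_{m+1}$ by the standing hypothesis on deformations, $c(g)$ lies in the kernel of the similitude character, hence (via the identification $1+p^m\widetilde M\leftrightarrow M$ fixed just before this statement) corresponds to an element $\tilde h(g)\in\op{Ad}^0\bar\rho$. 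I would then record that $\rho'_{m+1}=(1+p^m\widetilde{\tilde h})\rho_{m+1}$ by construction.

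Next I would check that $\tilde h$ is a cocycle. Writing out the homomorphism property for $\rho'_{m+1}$ in terms of $\rho_{m+1}$ and $c$, using that the kernel subgroup is abelian (it is $\cong\op{Ad}^0\bar\rho$ as an abelian group, since $p^{2m}\equiv 0$) and that conjugation of this kernel by $\rho_{m+1}(g)$ reduces mod $p$ to conjugation by $\bar\rho(g)$ — i.e.\ to the Galois action on $\op{Ad}^0\bar\rho$ — one gets the additive relation $\tilde h(gh)=\tilde h(g)+g\cdot\tilde h(h)$. Reducing modulo $p$ (the natural place to land, since $\op{Ad}^0\bar\rho$ is an $\F_p$-module) yields a genuine $1$-cocycle $h\in Z^1(\op{G}_{\{p\}},\op{Ad}^0\bar\rho)$, and I would define $h$ to be its class in $H^1$. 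Continuity of $h$ follows from continuity of $\rho_{m+1},\rho'_{m+1}$.

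For uniqueness, suppose two classes $h_1,h_2\in H^1(\op{G}_{\{p\}},\op{Ad}^0\bar\rho)$ both have the stated property. Then for suitable lifts $\tilde h_1,\tilde h_2$ one has $(1+p^m\widetilde{\tilde h_1})\rho_{m+1}=\rho'_{m+1}=(1+p^m\widetilde{\tilde h_2})\rho_{m+1}$, so $1+p^m\widetilde{\tilde h_1}=1+p^m\widetilde{\tilde h_2}$ as elements of $\op{GSp}_4(\Z/p^{m+1}\Z)$, forcing $\tilde h_1\equiv\tilde h_2$ and hence $h_1=h_2$ in $H^1$ — in fact the cocycles agree on the nose, not just up to coboundary, which is even stronger than stated. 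I would also remark that the cocycle $h$ is canonically attached to the pair $(\rho_{m+1},\rho'_{m+1})$, not merely its class, but only the class is needed downstream.

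The main obstacle, and the only place requiring genuine care, is the cocycle verification: one must be precise that conjugation by $\rho_{m+1}(g)$ acts on the kernel $1+p^m\op{Ad}^0\bar\rho$ through its mod-$p$ reduction $\bar\rho(g)$ — because $p^m\cdot p = p^{m+1}=0$ kills the correction terms — so that the twisted relation one extracts is exactly the Galois action defining $\op{Ad}^0\bar\rho$ in Definition \ref{d:3}, and not some a priori larger group action. Everything else is a routine bookkeeping exercise with the exact sequence $1\to\op{Ad}^0\bar\rho\to\op{Sp}_4(\Z/p^{m+1}\Z)\to\op{Sp}_4(\Z/p^m\Z)\to 1$ and the similitude normalization.
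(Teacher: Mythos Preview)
Your proposal is correct and follows essentially the same route as the paper: define the cocycle as the ratio $\rho'_{m+1}(g)\rho_{m+1}(g)^{-1}$, observe it lands in the kernel $\cong\op{Ad}^0\bar\rho$, verify the cocycle relation using that conjugation by $\rho_{m+1}(g)$ on this kernel reduces to the $\bar\rho$-action, and deduce uniqueness by cancelling $\rho_{m+1}$. If anything, you are more careful than the paper about two points the author leaves implicit: that the similitude normalization forces the ratio into $\op{Ad}^0\bar\rho$ rather than merely $\op{Ad}\bar\rho$, and that the conjugation action descends to $\bar\rho$ because $p^m\cdot p=0$.
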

\begin{proof}
define $1+p^m\tilde{h}:\op{G}_{\{p\}}\rightarrow \Ker\left(\op{Sp}_4(\Z/p^{m+1}\Z)\rightarrow\op{Sp}_4(\Z/p^m\Z)\right)\cong \text{Ad}^0\bar{\rho}$ as
\[1+p^m\tilde{h}(\sigma)=\rho_{m+1}(\sigma)\rho'_{m+1}(\sigma)^{-1}.\]
We check that using the isomorphism $\phi:\Ker\left(\op{Sp}_4(\Z/p^{m+1}\Z)\rightarrow\op{Sp}_4(\Z/p^m\Z)\right)\cong \text{Ad}^0\bar{\rho}$ we obtain a $1$-cocycle $h=\phi\circ(1+p^m\tilde{h})$.
\begin{equation*}
    \begin{aligned}
      h(\sigma\tau)&=\phi(\rho_{m+1}(\sigma\tau)\rho'_{m+1}(\sigma\tau)^{-1})=\phi(\rho_{m+1}(\sigma)\rho_{m+1}(\tau)\rho'_{m+1}(\tau)^{-1}\rho'_{m+1}(\sigma)^{-1})\\
      &=\phi(\rho_{m+1}(\sigma))h(\tau)\phi(\rho'_{m+1}(\sigma)^{-1})\\
      &=\phi(\rho_{m+1}(\sigma)\rho'_{m+1}(\sigma)^{-1})\phi(\rho'_{m+1}(\sigma))h(\tau)\phi(\rho'_{m+1}(\sigma)^{-1})=h(\sigma)h(\tau)^\sigma.
    \end{aligned}
\end{equation*}
Let us now consider $h,h'\in H^1(\op{G}_{\{p\}},\text{Ad}^0\bar{\rho})$, and let $\rho_{m+1}$ be a deformation of $\rho_m$. Suppose that $(1+p^m\tilde{h})\rho_{m+1}=(1+p^m\tilde{h}')\rho_{m+1}$. We have \[(1+p^m\tilde{h})\rho_{m+1}((1+p^m\tilde{h}')\rho_{m+1})^{-1}=1.\] Since we are working mod $p^{m+1}$, $(1+p^m\tilde{h}')^{-1}=1-p^m\tilde{h}'$ and the previous equality rewrites as $(1+p^m\tilde{h})(1-p^m\tilde{h}')=1+p^m(\tilde{h}-\tilde{h'})=1$. which yields $h=h'$.
\end{proof}
We say that the ``unramified outside $\{p\}$" deformation problem for $\bar{\rho}$ is \textit{unobstructed} if $H^2(\op{G}_{\{p\}},\op{Ad}^0\bar\rho)$ is trivial. From the previous discussion we automatically have the following.
\begin{lem}
Suppose that the unramified outside $\{p\}$ deformation problem for $\bar\rho$ is unobstructed and suppose we are given a deformation $\rho_m$ of $\rho$. Then there exists a deformation
\[\rho:\op{G}_{\{p\}}\rightarrow \op{GSp}_4(\Z_p)\]
such that $\rho=\rho_m$ mod $p^m$.
\end{lem}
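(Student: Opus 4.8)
The plan is to bootstrap from the mod-$p$ representation $\bar\rho$ up to characteristic zero one power of $p$ at a time, exactly as in the preceding sequence of lemmas. First I would observe that $\bar\rho$ itself is a mod-$p$ deformation of $\rho$ (there is nothing to lift at level $1$), so we may take $\rho_1 = \bar\rho$ as the base case. Then, proceeding by induction on $m$, suppose we have constructed a deformation $\rho_m \colon \op{G}_{\{p\}} \to \op{GSp}_4(\Z/p^m\Z)$ with $\nu(\rho_m) = \psi_m$. By the obstruction lemma just proved, $\rho_m$ lifts to a deformation $\rho_{m+1} \colon \op{G}_{\{p\}} \to \op{GSp}_4(\Z/p^{m+1}\Z)$, unramified outside $\{p\}$, precisely when the class $\mathcal{O}(\rho_m) \in H^2(\op{G}_{\{p\}}, \op{Ad}^0\bar\rho)$ vanishes; since the deformation problem is assumed unobstructed, $H^2(\op{G}_{\{p\}}, \op{Ad}^0\bar\rho) = 0$, so this obstruction automatically vanishes and the lift $\rho_{m+1}$ exists. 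This produces a compatible tower $(\rho_m)_{m \geq 1}$ of lifts.

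The remaining point is to package the tower into a single characteristic-zero representation. Since $\op{GSp}_4(\Z_p) = \varprojlim_m \op{GSp}_4(\Z/p^m\Z)$ and each $\rho_{m+1}$ reduces to $\rho_m$ modulo $p^m$ by construction, the maps $\rho_m$ glue to a homomorphism $\rho \colon \op{G}_{\{p\}} \to \op{GSp}_4(\Z_p)$ with $\rho \bmod p^m = \rho_m$ for all $m$; in particular $\rho \bmod p^m = \rho_m$ for the given $m$, which is the assertion. Continuity of $\rho$ follows from continuity of each $\rho_m$ together with the fact that the topology on $\op{GSp}_4(\Z_p)$ is the inverse-limit topology, so a map into the limit is continuous iff each of its reductions is. One should also note that $\rho$ is a deformation of $\bar\rho$ (it reduces to $\bar\rho$ mod $p$) and that $\nu(\rho) = \psi$, since this holds at every finite level by the construction of the $\rho_m$.

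The only genuinely delicate issue here is making the inductive choices compatible: at each stage the lemma on the structure of the set of lifts shows that the possible $\rho_{m+1}$ form a torsor under $H^1(\op{G}_{\{p\}}, \op{Ad}^0\bar\rho)$, so there is genuine choice, but any choice works for the bare existence statement since compatibility with $\rho_m$ modulo $p^m$ is built into the definition of a lift. (The finer control over which lift to choose, needed to force the image to contain $\mathcal{U}_2$, is not required for this lemma and is deferred to the next section.) Thus the argument is a straightforward induction plus a passage to the inverse limit, and no step presents a real obstacle beyond correctly invoking the unobstructedness hypothesis.
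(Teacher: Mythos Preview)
Your argument is correct and is exactly what the paper has in mind: the paper gives no explicit proof here, merely writing ``From the previous discussion we automatically have the following,'' and you have spelled out that discussion---iterate the obstruction lemma (using $H^2(\op{G}_{\{p\}},\op{Ad}^0\bar\rho)=0$ to kill $\mathcal{O}(\rho_m)$ at every stage) and pass to the inverse limit. One small cosmetic point: since the hypothesis already hands you a specific $\rho_m$, your induction should start there rather than at $\rho_1=\bar\rho$, but this does not affect the argument.
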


In the next section we will describe sufficient conditions for a representation $\bar{\rho}=\bar{\rho}_{\pair}$ to be unobstructed outside $\{p\}$ in the sense defined here, and in Section $5$ we will find a condition on $p$ that guarantees the existence of $\pair$ such that the representation $\bar{\rho}_{\pair}$  is unobstructed outside $\{p\}$. Before doing so, we take some time to define some more objects that we will use to prove results on the size of the image $\op{Im}(\rho)$ of the lift we will produce.

Let $H$ be a closed subgroup of $\Symp$, the projection maps induce a filtration $\{H_i\}_{i\in\N}$ of $H$, where $H_i=H\cap \mathcal{U}_i$.
\begin{defi}
For any closed subgroup $H$ of $\Symp$ we denote $\Phi_i(H)$ the quotient $H_m/H_{m+1}$ viewed as a subset of Ad$^0\bar{\rho}$.  We will consider the case $H=\text{Im}(\rho)$ and use the notation $\Phi_i(\rho)\coloneqq\Phi_i(\text{Im}(\rho))$ for brevity.
\end{defi}

Suppose $\rho$ is a lift of $\bar{\rho}$ to characteristic $0$ and recall we write $\rho_m$ for its mod-$p^m$ reduction. Note that an element $v$ of $\Phi_m(\rho)$ can be lifted to an element of $\ker(\rho_m)/\ker(\rho_{m+1})$, which we will denote as $1+p^m\tilde{v}$. We will use the existence of such lifts to do most of the calculations in this section.

%\begin{defi}
%For $m\geq 1$ set $\Phi_m(\rho)\coloneqq\rho_{m+1}(\ker(\rho_m))$. Note that $\Phi_m(\rho)$ is isomorphic to $\ker(\rho_m)/\ker(\rho_{m+1})$ and can therefore be viewed as a submodule of $\op{Ad}\bar{\rho}$. Here $g\in\ker(\rho_m)$ is identified with an element $v\in\op{Ad}\bar{\rho}$ using the relation $\rho_{m+1}(g)=1+p^mv$.
%\end{defi}

%Recall now that, by definition \ref{d:3}, the Galois action on $\op{Ad}\bar{\rho}$ is obtained by composition of the representation map with the adjoint action. Therefore we can express the action of $\sigma\in\op{G}_{\{p\}}$ on $v\in\op{Ad}\bar{\rho}$ as
%\[\rho_{m+1}(\sigma)(1+p^mv)\rho_{m+1}(\sigma)^{-1}=(1+p^m\bar{\rho}(\sigma)v\bar{\rho}(\sigma)^{-1})=1+p^m(\sigma\cdot v).\]

%This implies, by the normality of $\ker\rho_m$ in $\op{G}_{\{p\}}$ that $\Phi_m(\rho)\subset\op{Ad}\bar{\rho}$ is Galois stable. 
By definition each of the $\Phi_i(\rho)$ is a Galois-stable submodule of Ad$^0\bar{\rho}$. Recall that the similitude character of $\rho$ is stipulated to be equal to the character $\psi$ which is chosen to be congruent to $\tilde{\nu}$ modulo $p^2$. Therefore, for $g\in\ker \bar{\rho}$ it follows that $\nu\left(\rho_2(g)\right)=\psi_2(g)=\tilde{\nu}_2(\bar{\rho}(g))=1$. Writing $\rho_2(g)=\op{Id}+pX$, we get the equality
\[\nu(\rho_2(g))=1+p\omega (X),\]
and it follows that $\omega(X)$=0 in $\Fp$ so that $\Phi_1(\rho)\subseteq\op{Ad}^0\bar{\rho}$.

\begin{lem}\label{l:2}
Let $\rho$ be as above. For $l,m\geq 1$, we have $[\Phi_l(\rho),\Phi_m(\rho)]\subseteq \Phi_{l+m}(\rho)$ viewing each as a subset of Ad$^0\bar{\rho}$.
\end{lem}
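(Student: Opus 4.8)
The plan is to argue directly with matrix representatives in $H:=\op{Im}(\rho)\subseteq\Symp$, reducing the statement to the elementary commutator estimate for congruence subgroups together with the identification of each $\Phi_i(\rho)$ with a submodule of $\op{Ad}^0\bar\rho$. First I would fix classes $v\in\Phi_l(\rho)$ and $w\in\Phi_m(\rho)$ and lift them to elements $g\in H_l=H\cap\mathcal{U}_l$ and $h\in H_m=H\cap\mathcal{U}_m$. Since $g\equiv I\pmod{p^l}$ and $h\equiv I\pmod{p^m}$, I can write $g=I+p^lA_0$ and $h=I+p^mB_0$ with $A_0,B_0\in\M_4(\Z_p)$, and by construction $A_0\equiv v$ and $B_0\equiv w$ modulo $p$ under the chosen identification $\mathcal{U}_i/\mathcal{U}_{i+1}\cong\mathfrak{gsp}_4$ sending $I+p^i\tilde M\mapsto M\bmod p$.

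Setting $A=g-I$ and $B=h-I$, the key input is the identity
\[ ghg^{-1}h^{-1}-I=(gh-hg)g^{-1}h^{-1}=(AB-BA)g^{-1}h^{-1}. \]
Since $AB-BA\equiv 0\pmod{p^{l+m}}$ and $g^{-1},h^{-1}$ have entries in $\Z_p$, this shows $[g,h]:=ghg^{-1}h^{-1}\in\mathcal{U}_{l+m}$; and because $H$ is a subgroup, $[g,h]\in H\cap\mathcal{U}_{l+m}=H_{l+m}$, so the class of $[g,h]$ already lies in $\Phi_{l+m}(\rho)$. It then remains to identify this class with $[v,w]$. As $l,m\geq 1$ we have $g^{-1}\equiv I\pmod{p^l}$ and $h^{-1}\equiv I\pmod{p^m}$, hence $g^{-1}h^{-1}\equiv I\pmod p$, so $(AB-BA)(g^{-1}h^{-1}-I)\equiv 0\pmod{p^{l+m+1}}$. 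Therefore
\[ [g,h]\equiv I+(AB-BA)=I+p^{l+m}(A_0B_0-B_0A_0)\pmod{p^{l+m+1}}, \]
and the image of $[g,h]$ in $\Phi_{l+m}(\rho)\subseteq\op{Ad}^0\bar\rho$ is exactly $A_0B_0-B_0A_0\bmod p=vw-wv=[v,w]$, exhibiting $[v,w]$ as an element of $\Phi_{l+m}(\rho)$.

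There is no serious obstacle here; the computation is routine. The two points requiring a little care are that $[g,h]$ must be placed in $H_{l+m}$ rather than only in $\mathcal{U}_{l+m}$ — which is precisely where one uses that $H$ is closed under the group operations — and that the correction terms coming from $g^{-1}$ and $h^{-1}$ not being the identity are of order at least $p^{l+m+1}$, which is exactly the role played by the hypothesis $l,m\geq 1$ and would fail without it.
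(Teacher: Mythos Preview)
Your argument is correct and follows the same strategy as the paper: lift $v,w$ to elements of $H_l,H_m$, compute the group commutator, and read off the Lie bracket at level $l+m$. Your execution is in fact cleaner than the paper's---you use the identity $ghg^{-1}h^{-1}-I=(AB-BA)g^{-1}h^{-1}$ in one stroke, whereas the paper expands $C^{-1}$ and $D^{-1}$ via truncated geometric series in $\op{GSp}_4(\Z/p^{l+m+1}\Z)$ and multiplies out term by term; but the underlying content is identical.
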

\begin{proof}
%Let $\mathcal{G}_k$ be the kernel of the modulo $p^k$ map
%\[\mathcal{G}_k=\ker\{\op{GSp}_4(\Z/p^{k+1}\Z)\rightarrow\op{GSp}_4(\Z/p^k\Z)\}.\]
As anticipated, in order to get this result, we will work with appropriate lifts to characteristic $p^{l+m+1}$. This approach allows us to reduce a problem on the sub-modules $\Phi_i(\rho)$ with respect to the Lie bracket to a computation on matrices. 
Let $c\in\Phi_l(\rho)$ and $d\in\Phi_m(\rho)$, %consider their lifts $1+p^lc$ and $1+p^md$ as elements of $\ker(\rho{l+1})$ and $\ker(\rho_{m+1})$ respectively.
%in $\mathcal{G}_l$ and $\mathcal{G}_m$ respectively.
let $C=1+p^l\tilde{c}+p^{l+1}S$ and $D=1+p^m\tilde{d}+p^{m+1}T$ in $\op{GSp}_4(\Z/p^{m+l+1}\Z)$ be lifts of $c$ and $d$ as elements of $\text{Im}(\rho_{m+l+1})$. We want to show that the element $CDC^{-1}D^{-1}$ in $\text{Im}(\rho_{m+l+1})$ is a lift of the element $[c,d]$ of Ad$^0(\bar{\rho})$ and lies in $\Phi_{m+l}(\rho)$. Assume without loss of generality that $l\leq m$. Since we are working modulo $p^{m+l+1}$ it follows that $p^m\tilde{d}p^{m+1}T=0$ and $(p^m\tilde{d})^3=0$. We have that
\[CDC^{-1}D^{-1}=(1+p^l\tilde{c}+p^{l+1}S)(1+p^m\tilde{d}+p^{m+1}T)(1+p^l\tilde{c}+p^{l+1}S)^{-1}(1+p^m\tilde{d}+p^{m+1}T)^{-1}.\]
In order to ease the form of the upcoming calculation let us rewrite this as
\[CDC^{-1}D^{-1}=(1+p^l\tilde{c}+p^{l+1}S)(p^m\tilde{d}+1+p^{m+1}T)(1+p^l\tilde{c}+p^{l+1}S)^{-1}(1+p^m\tilde{d}+p^{m+1}T)^{-1}.\]
Upon noticing that we have the identities
\begin{equation*}
\begin{aligned}
  &(1+p^m\tilde{d}+p^{m+1}T)^{-1} =(1-p^m\tilde{d}-p^{m+1}T+(p^m\tilde{d})^2),\\
  &(1+p^l\tilde{c}+p^{l+1}S)^{-1}= (1-p^l\tilde{c}-p^{l+1}S+\dots+(-1)^{m+1}(p^l\tilde{c})^{m+1}) ;\end{aligned}
\end{equation*} and unraveling the second factor and  the one above becomes
\begin{equation*}\begin{aligned}
 (1+p^l\tilde{c}+p^{l+1}S)p^m\tilde{d}(1-p^l\tilde{c}-p^{l+1}S+\dots+(-1)^{m+1}(p^l\tilde{c})^{m+1})(1-p^m\tilde{d}-p^{m+1}T+(p^m\tilde{d})^2)+ \\+(1-p^m\tilde{d}-p^{m+1}T+(p^m\tilde{d})^2)+p^{m+1}T.
\end{aligned}
\end{equation*}
After cancelling the factors whose degrees add up to more than $l+m$ we obtain
\[(1+p^l\tilde{c}+p^{l+1}S)p^m\tilde{d}(1-p^l\tilde{c}-p^{l+1}S)(1-p^m\tilde{d}-p^{m+1}T)+(1-p^m\tilde{d}-p^{m+1}T+(p^m\tilde{d})^2)+p^{m+1}T.\]
Finishing the calculations we get
\[p^m\tilde{d}+p^{m+1}\widetilde{[c,d]}-(p^m\tilde{d})^2+1-p^m\tilde{d}-p^{m+1}T+(p^m\tilde{d})^2+p^{m+1}T=1+p^{m+l}\widetilde{[c,d]} .\]
This shows $CDC^{-1}D^{-1}$ reduces to the element $[c,d]$ in $\Phi_{m+l}(\rho)$.
\end{proof}

The following Lemma will be used to show that the representation we construct contains a finite index subgroup of $\op{Sp}_4(\Z_p)$. Recall the definition of $\mathcal{U}_m$ as in the beginning of Section \ref{secdefo} is $\mathcal{U}_m\coloneqq\Ker(\Symp\rightarrow\op{GSp}_4(\Z/p^m\Z)).$
\begin{lem}\label{l:3}
Let $\rho:\op{G}_{\{p\}}\rightarrow \Symp$ be a continuous Galois representation lifting $\bar{\rho}$. Let $m\geq 1$ be such that $\Phi_m(\rho)$ contains $\op{Ad}^0\bar{\rho}$. Then we have the following:
\begin{enumerate}
    \item $\Phi_k(\rho)$ contains $\op{Ad}^0\bar{\rho}$ for $k\geq m$,
    \item the image of $\rho$ contains $\mathcal{U}_m$.
\end{enumerate}
\end{lem}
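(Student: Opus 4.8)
The plan is to prove the two assertions by separate, essentially elementary, mechanisms: part (1) by raising to the $p$-th power, and part (2) by a successive-approximation argument inside the profinite group $\Symp$ that takes (1) as input.

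For (1), identify each graded quotient $\mathcal{U}_k/\mathcal{U}_{k+1}$ with the underlying $\F_p$-vector space of $\op{Ad}\bar\rho$ via $1+p^kM\mapsto\bar M$, compatibly in $k$, so that $\op{Ad}^0\bar\rho$ is the same subspace of every one of them. The point is that the $p$-th power map carries $\mathcal{U}_k$ into $\mathcal{U}_{k+1}$ and induces on these graded pieces the identity map of $\op{Ad}\bar\rho$: for $k\geq 1$ and $g=1+p^kM+(\text{terms of valuation}\geq k+1)$ one checks
\[g^p\equiv 1+p^{k+1}M\pmod{p^{k+2}},\]
since every other contribution (the binomial terms $\binom{p}{j}p^{jk}M^j$ with $2\leq j\leq p$, and the correction produced by the higher-order part of $g$, e.g. organized via $(gh)^p=g^p\cdot\prod(\text{conjugates of }h)$) has valuation $\geq k+2$ — equivalently, this is the familiar fact that $\mathcal{U}_1$ is a uniform pro-$p$ group. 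Since $\op{Im}(\rho)$ is a group, $g\in\op{Im}(\rho)\cap\mathcal{U}_k$ forces $g^p\in\op{Im}(\rho)\cap\mathcal{U}_{k+1}$, and comparing classes gives $\Phi_k(\rho)\subseteq\Phi_{k+1}(\rho)$. Iterating from $k=m$, where $\Phi_m(\rho)\supseteq\op{Ad}^0\bar\rho$ by hypothesis, proves (1).

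For (2), observe that $\op{Im}(\rho)$ is closed in $\Symp$ (continuous image of the compact group $\op{G}_{\{p\}}$), so $\op{Im}(\rho)\cap\mathcal{U}_m$ is a closed subgroup of the profinite group $\mathcal{U}_m$; since the $\mathcal{U}_k$ ($k\geq m$) form a basis of neighbourhoods of $1$, and a closed subgroup of a profinite group surjecting onto all of its finite quotients is the whole group, it suffices to show $\op{Im}(\rho)\cap\mathcal{U}_m\twoheadrightarrow\mathcal{U}_m/\mathcal{U}_k$ for every $k\geq m$. I would do this by induction on $k$, the case $k=m$ being trivial. For the step $k\to k+1$: given $u\in\mathcal{U}_m$, the inductive hypothesis yields $v\in\op{Im}(\rho)\cap\mathcal{U}_m$ with $v^{-1}u\in\mathcal{U}_k$, and the class of $v^{-1}u$ in $\mathcal{U}_k/\mathcal{U}_{k+1}\cong\op{Ad}\bar\rho$ can be cancelled off inside $\op{Im}(\rho)$ precisely when the image of $\op{Im}(\rho)\cap\mathcal{U}_k=\op{Im}(\rho)\cap\mathcal{U}_m\cap\mathcal{U}_k$ in $\mathcal{U}_k/\mathcal{U}_{k+1}$ — namely $\Phi_k(\rho)$ — is \emph{all} of $\op{Ad}\bar\rho$. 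So (2) reduces to strengthening $\Phi_k(\rho)\supseteq\op{Ad}^0\bar\rho$ (from (1)) to $\Phi_k(\rho)=\op{Ad}\bar\rho$ for $k\geq m$, and this central direction is the one genuine point of the lemma.

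Since $p$ is odd, $\omega(\op{Id})=2\neq 0$, so $\op{Ad}\bar\rho=\op{Ad}^0\bar\rho\oplus\F_p\cdot\op{Id}$ and it remains to exhibit, for each $k\geq m$, one element of $\Phi_k(\rho)$ with $\omega\neq 0$. For $\sigma\in\rho^{-1}(\mathcal{U}_k)$, writing $\rho(\sigma)=1+p^kM$ gives $\nu(\rho(\sigma))=1+p^k\omega(M)+O(p^{k+1})$; since $\nu\circ\rho=\psi$, the set of such $\omega(M)$ is exactly the image of $\nu$ on $\op{Im}(\rho)\cap\mathcal{U}_k$ inside $(1+p^k\Z_p)/(1+p^{k+1}\Z_p)\cong\F_p$. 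This is where the similitude character $\psi$ fixed in Section~\ref{secdefo} does its work: $\psi$ is chosen so that $\nu$ restricted to $\op{Im}(\rho)\cap\mathcal{U}_m$ surjects onto $1+p^m\Z_p$, and then applying the $p$-th power map of (1) to a suitable element moves one level up and shows $\nu(\op{Im}(\rho)\cap\mathcal{U}_k)$ surjects onto $(1+p^k\Z_p)/(1+p^{k+1}\Z_p)$ for all $k\geq m$. Combined with (1) this gives $\Phi_k(\rho)=\op{Ad}\bar\rho$, closing the induction and proving (2). The main obstacle is precisely this last step: brackets (Lemma~\ref{l:2}) and $p$-th powers can only build the derived subalgebra $[\mathfrak{gsp}_4,\mathfrak{gsp}_4]=\mathfrak{sp}_4=\op{Ad}^0\bar\rho$, so the scalar line $\F_p\cdot\op{Id}$ must be supplied by the similitude character, and pinning this down is what forces $m$ to be at least the level of $\psi$ and must be arranged carefully in the main construction.
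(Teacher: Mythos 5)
Your treatment of part (1) is correct and uses a genuinely different mechanism from the paper, and arguably a cleaner one. The paper deduces (1) from the bracket inclusion $[\Phi_l(\rho),\Phi_m(\rho)]\subseteq\Phi_{l+m}(\rho)$ of Lemma~\ref{l:2} together with $[\op{Ad}^0\bar\rho,\op{Ad}^0\bar\rho]=\op{Ad}^0\bar\rho$; taken literally this only produces the containment at levels $m,2m,3m,\dots$, so for $m>1$ it does not by itself cover every $k\geq m$. Your $p$-th-power argument --- that for $p$ odd and $k\geq1$, $(1+p^kM+O(p^{k+1}))^p=1+p^{k+1}M+O(p^{k+2})$, so $\Phi_k(\rho)\subseteq\Phi_{k+1}(\rho)$ --- is the standard uniform-pro-$p$ fact and gives (1) directly for all $k\geq m$ without any further input. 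That is a net improvement over what is written in the paper.

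For part (2) your successive-approximation scheme is exactly what the paper's one-sentence reduction (``$H=\varprojlim H_l$, reduce to checking $\Phi_k(H)\supseteq\op{Ad}^0\bar\rho$'') is gesturing at, and you have correctly isolated the one non-formal point: $\mathcal{U}_k/\mathcal{U}_{k+1}\cong\mathfrak{gsp}_4=\op{Ad}\bar\rho$, which is one dimension larger than $\op{Ad}^0\bar\rho=\mathfrak{sp}_4$, so knowing $\op{Ad}^0\bar\rho\subseteq\Phi_k(\rho)$ does not by itself let you cancel an arbitrary error class. The gap in your writeup is the assertion that ``$\psi$ is chosen so that $\nu$ restricted to $\op{Im}(\rho)\cap\mathcal{U}_m$ surjects onto $1+p^m\Z_p$.'' Section~\ref{secdefo} only requires $\psi$ to be unramified outside $p$ and congruent to $\tilde\nu(\rho)$ modulo $p^2$; for the diagonal $\bar\rho$ of~(1) one has $\nu(\bar\rho)=1$, hence $\tilde\nu(\rho)=1$, and the trivial character $\psi=1$ is an admissible choice. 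With that choice $\op{Im}(\rho)\subseteq\op{Sp}_4(\Z_p)$, so the image certainly does not contain the $\op{GSp}_4$-kernel $\mathcal{U}_m$, and your induction cannot close. So what you have really shown is that, as stated, the lemma needs one of two repairs: either $\mathcal{U}_m$ in the conclusion should be read as $\ker\bigl(\op{Sp}_4(\Z_p)\to\op{Sp}_4(\Z/p^m\Z)\bigr)$ --- which is all the introduction's Question requires and is consistent with the paper's identification of graded pieces with $\op{Ad}^0\bar\rho$ throughout Section~\ref{secdefo} --- or the definition of $\psi$ must be strengthened to require surjectivity onto $1+p^2\Z_p$. (Be aware that even under the first reading the approximation argument still needs care: the elements of $\op{Im}(\rho)\cap\mathcal{U}_k$ realizing classes in $\op{Ad}^0\bar\rho$ need not lie in $\op{Sp}_4(\Z_p)$, only their leading terms are traceless for $\omega$, so one must track the accumulated similitude error level by level.) Your instinct that the scalar line $\Fp\cdot\op{Id}$ can only be supplied by the similitude character is the right diagnosis; you just cannot invoke a property of $\psi$ that the paper never imposes.
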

\begin{proof}
Since $[\op{Ad}^0\bar{\rho},\op{Ad}^0\bar{\rho}]=\op{Ad}^0\bar{\rho}$, condition (1) follows by the previous lemma. Let now $H$ be the image of $\rho$. Since the map is continuous and $\op{G}_{\{p\}}$ is compact $H$ is closed. For $l\geq 1$ let $H_l$ be the projection of $H$ to $\op{GSp}_4(\Z/p^l\Z).$ Since $H$ is closed we have $H=\lim\limits_{\leftarrow l}H_l$, and we are reduced to check that $\Phi_k(H)$ contains $\op{Ad}^0\bar{\rho}$ for $k\geq m$, which follows from (1). 
\end{proof}
\begin{lem}\label{lphi}
Let $\rho:\op{G}_{\{p\}}\rightarrow\Symp$ be a continuous Galois representation lifting $\bar{\rho}$. Assume that $\Phi_1(\rho)$ contains an element $w=a\op{diag}(1,0,-1,0)+b\op{diag}(0,1,0,-1)$ with $a\neq b$, furthermore assume it contains the root vectors $X_{\delta_1,\delta_2}$ for all the roots with $\delta_i=\pm1$. Then we have the following:
\begin{enumerate}
    \item $\Phi_2(\rho)$ contains $\op{Ad}^0\bar{\rho}$,
    \item The image of $\rho$ contains $\mathcal{U}_2$
\end{enumerate}
\end{lem}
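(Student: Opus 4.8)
The plan is to derive part (2) from part (1) by invoking Lemma \ref{l:3}, and to prove part (1) by converting the statement about $\Phi_2(\rho)$ into a bracket computation inside the Lie algebra $\op{Ad}^0\bar{\rho}\cong\mathfrak{sp}_4$.

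For part (1): by Lemma \ref{l:2} we have $[\Phi_1(\rho),\Phi_1(\rho)]\subseteq\Phi_2(\rho)$. Since $\Phi_1(\rho)$ is an $\Fp$-subspace of $\op{Ad}^0\bar{\rho}$ (as observed in the discussion preceding Lemma \ref{l:2}) and $\op{Ad}^0\bar{\rho}$ is a Lie subalgebra of $\op{Ad}\bar{\rho}$, the $\Fp$-span of $[\Phi_1(\rho),\Phi_1(\rho)]$ is an $\Fp$-subspace of $\op{Ad}^0\bar{\rho}$ sitting inside $\Phi_2(\rho)$. So it suffices to show that the brackets of the five elements $w,X_{1,1},X_{1,-1},X_{-1,1},X_{-1,-1}$, all of which lie in $\Phi_1(\rho)$ by hypothesis, span $\op{Ad}^0\bar{\rho}$; recall from Proposition \ref{p:2} that this space is ten-dimensional, with basis the torus $\mathfrak{t}=\langle\op{diag}(1,0,-1,0),\op{diag}(0,1,0,-1)\rangle$, the four long root vectors $X_{\pm 2,0},X_{0,\pm 2}$, and the four short root vectors $X_{\pm 1,\pm 1}$.

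The computation I would then run uses the matrix forms of the root vectors that one reads off from the proof of Proposition \ref{p:2}, namely $X_{1,1}=E_{14}+E_{23}$, $X_{1,-1}=E_{12}-E_{43}$, $X_{-1,1}=E_{21}-E_{34}$, $X_{-1,-1}=E_{32}+E_{41}$ in terms of the matrix units $E_{ij}$. From these one obtains, up to sign, $[X_{1,1},X_{1,-1}]=-2X_{2,0}$, $[X_{-1,1},X_{-1,-1}]=-2X_{-2,0}$, $[X_{1,1},X_{-1,1}]=-2X_{0,2}$, $[X_{1,-1},X_{-1,-1}]=-2X_{0,-2}$, so all four long root vectors lie in $\Phi_2(\rho)$ because $p$ is odd; and $[X_{1,1},X_{-1,-1}]=\op{diag}(1,1,-1,-1)$ together with $[X_{1,-1},X_{-1,1}]=\op{diag}(1,-1,-1,1)$, whose sum and difference are $2\op{diag}(1,0,-1,0)$ and $2\op{diag}(0,1,0,-1)$, so $\mathfrak{t}\subseteq\Phi_2(\rho)$. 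Finally $[w,X_{\delta_1,\delta_2}]=(a\delta_1+b\delta_2)X_{\delta_1,\delta_2}$: for $(\delta_1,\delta_2)\in\{(1,-1),(-1,1)\}$ the coefficient is $\pm(a-b)$, nonzero since $a\neq b$, giving $X_{1,-1},X_{-1,1}\in\Phi_2(\rho)$; for $(\delta_1,\delta_2)\in\{(1,1),(-1,-1)\}$ the coefficient is $\pm(a+b)$, giving $X_{1,1},X_{-1,-1}\in\Phi_2(\rho)$ provided $a+b\neq 0$. Collecting everything, a full basis of $\op{Ad}^0\bar{\rho}$ lies in $\Phi_2(\rho)$, which is part (1); part (2) is then immediate from Lemma \ref{l:3} applied with $m=2$.

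The step I expect to be the genuine obstacle is getting the four short root vectors into $\Phi_2(\rho)$: no commutator of long root vectors or of torus elements yields a short root vector, so the only mechanism available is bracketing against $w$, and this is exactly where the hypothesis on $w$ must be used. The assumption $a\neq b$ covers $X_{1,-1}$ and $X_{-1,1}$; covering $X_{1,1}$ and $X_{-1,-1}$ in the same way needs in addition $a\neq -b$, i.e. $w$ should be a regular element of $\mathfrak{t}$, not proportional to $\op{diag}(1,1,-1,-1)$ nor to $\op{diag}(1,-1,-1,1)$. This is the case for the element $w$ produced in the construction of the lift in the next sections; in the degenerate case $a=-b$ one would need to know that $\Phi_1(\rho)$ contains a further element (a second torus direction or one long root vector) to close the argument. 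The remaining points are routine: that $\Phi_2(\rho)$ is an $\Fp$-subspace of $\op{Ad}\bar{\rho}$ containing $[\Phi_1(\rho),\Phi_1(\rho)]$, as the setup preceding Lemma \ref{l:2} guarantees.
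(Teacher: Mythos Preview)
Your argument is essentially identical to the paper's: reduce to bracket computations via Lemma~\ref{l:2}, compute the same list of commutators $[X_{\pm1,\pm1},X_{\pm1,\mp1}]$, $[X_{1,1},X_{-1,-1}]$, $[X_{1,-1},X_{-1,1}]$, and $[w,X_{\pm1,\pm1}]$ to fill out a basis of $\op{Ad}^0\bar\rho$, then invoke Lemma~\ref{l:3} for part~(2).

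The gap you flag is real, and the paper's own proof shares it. The paper simply asserts that $[w,X_{\pm1,\pm1}]=cX_{\pm1,\pm1}$ with $c$ ``nontrivial'', but as you correctly compute, $c=a\delta_1+b\delta_2$, and for $(\delta_1,\delta_2)=(1,1)$ or $(-1,-1)$ this is $\pm(a+b)$, which the stated hypothesis $a\neq b$ does not force to be nonzero. No other commutator among the given elements of $\Phi_1(\rho)$ produces a nonzero multiple of $X_{1,1}$ or $X_{-1,-1}$, so neither proof, as written, establishes $X_{1,1},X_{-1,-1}\in\Phi_2(\rho)$ from the hypothesis alone; one genuinely needs $w$ to be regular in $\mathfrak t$, i.e.\ both $a\neq b$ and $a+b\neq 0$. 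In the sole application (Theorem~\ref{thmdefo}) the element $w$ has $(a,b)=(\alpha,\beta)$ viewed in $\Fp$, so the missing condition becomes $\alpha+\beta\not\equiv 0\pmod p$; this is not among the listed hypotheses there either. Your diagnosis of where the argument needs strengthening is accurate and more careful than the paper's treatment.
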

\begin{proof}
By Lemma \ref{l:2}, $[\Phi_1(\rho),\Phi_1(\rho)]$ is contained in $\Phi_2(\rho)$. Using the notation above it is easy to check that the following
\begin{itemize}
    \item $[w,X_{\pm1,\pm1}]=cX_{\pm1,\pm1}$ for a nontrivial constant $c$,
    \item ${[X_{\pm1,\pm1},X_{\pm1,\mp1}]=\mp2X_{\pm2,0}}$,
    \item ${[X_{\pm1,\pm1},X_{\mp1,\pm1}]=\mp2X_{0,\pm2}}$,
    \item $[X_{1,1},X_{-1,-1}]=\text{diag}(1,1,-1,-1)\in\mathfrak{t}$,
    \item $[X_{1,-1},X_{-1,1}]$=$\text{diag}(1,-1,-1,1)\in\mathfrak{t}$.
\end{itemize}
Since the two diagonal matrices generate $\mathfrak{t}$  and we have all the other root vectors, we have  that Ad$^0(\bar{\rho})\subset\Phi_2$ and (1) is satisfied. By Lemma \ref{l:3} condition (2) holds as well, concluding the proof.
\end{proof}
\section{Deforming a suitable representation}\label{secthm}
Recall from (1) that, for a choice of $\pair$, $\bar{\rho}=\bar{\rho}_{\pair}$ is the representation.
\[\bar{\rho}=\begin{pmatrix} \bar{\chi}^\alpha & & & \\ & \bar{\chi}^\beta & & \\ & & \bar{\chi}^{-\alpha} & \\ & & & \bar{\chi}^{-\beta} \end{pmatrix}:\op{G}_{\{p\}}\rightarrow\op{GSp}_4(\Fp).\]
In this section, we show that if the pair $\pair$ satisfies certain hypothesis, then the associated representation $\bar{\rho}$ lifts to a characteristic $0$ representation with big image. The proof of the main theorem requires an extra technical step to guarantee the vanishing of the second cohomology group $H^2(\op{G}_{\{p\}},\text{Ad}^0(\bar{\rho}))$. The main idea is explained in \cite{ray2021constructing} and relies on applying the inflation-restriction sequence associated to the exact sequence of Galois groups
\[1\rightarrow\op{G}_{\Q(\mu_p)}\rightarrow\op{G}_\Q\rightarrow\operatorname{Gal}(\Q(\mu_p)/\Q)\rightarrow 1.\]
This allows A.Ray to relate properties of the groups $\Sh^i_{\{p\}}$ defined in Section 2 with properties of the class group of $\Q(\mu_p)$.
Let $A$ be the class group of $\Q(\mu_p)$ and let $\mathcal{C}$ denote the mod-$p$ class group $\mathcal{C}\coloneqq A\otimes\Fp$. The Galois group $\op{Gal}(\Q(\mu_p)/\Q)$ acts on $\mathcal{C}$ via the natural action. Since the order of $\op{Gal}(\Q(\mu_p)/\Q)$ is prime to $p$, it follows that $\mathcal{C}$ decomposes into eigenspaces
\[\mathcal{C}=\bigoplus\limits_{i=0}^{p-1}\mathcal{C}(\bar{\chi}^i),\]
where $\mathcal{C}(\bar{\chi}^i)=\{x\in\mathcal{C}| g\cdot x=\bar{\chi}^i(g)x\}$.
\begin{defi}\label{dep}
The irregularity index $e_p$ of a prime $p$ is the number of eigenspaces $C(\bar{\chi}^i)$ that are non-zero.
\end{defi}
%We recall the following
%\begin{cnj}[Vandiver]\label{vanconj}
%$\mathcal{C}(\bar{\chi}^i)=0$ for $i$ even.
%\end{cnj}

\begin{lem}\label{lray}
For $0\leq i \leq p-2$ the following hold:
\begin{enumerate}
    \item the group $\Sh^1_{\{p\}}(\Fp(\bar{\chi}^i))$ is zero if $\mathcal{C}(\bar{\chi}^i)$ is zero
    \item the group $\Sh^2_{\{p\}}(\Fp(\bar{\chi}^i))$ is zero if $\mathcal{C}(\bar{\chi}^{p-i})$ is zero
\end{enumerate}
\end{lem}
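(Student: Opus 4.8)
The statement to prove is Lemma~\ref{lray}, relating the vanishing of the $\Sh$-groups $\Sh^i_{\{p\}}(\Fp(\bar\chi^i))$ to the vanishing of the eigenspaces $\mathcal{C}(\bar\chi^i)$ of the mod-$p$ class group of $\Q(\mu_p)$. The two parts are dual to one another via the Global duality for $\Sh$-groups (Theorem~8.6.7 of Neukirch et al., quoted above): since $\Fp(\bar\chi^i)^* = \op{Hom}(\Fp(\bar\chi^i),\mu_p) = \Fp(\bar\chi^{1-i})$, the pairing identifies $\Sh^2_{\{p\}}(\Fp(\bar\chi^i))$ with $\Sh^1_{\{p\}}(\Fp(\bar\chi^{1-i}))^\vee$. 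Modulo re-indexing (and the fact that $\bar\chi$ has order $p-1$, so exponents only matter mod $p-1$), part~(2) follows from part~(1). So the real content is part~(1): I would prove that statement and then deduce (2) by duality, being careful that the shift by the cyclotomic character turns the exponent $i$ into $p-i$ up to the order of $\bar\chi$.

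For part~(1) the plan is to use the inflation–restriction sequence attached to $1\to \op{G}_{\Q(\mu_p)}\to\op{G}_{\{p\}}\to\Delta\to 1$, where $\Delta = \op{Gal}(\Q(\mu_p)/\Q)$ has order prime to $p$. Because $|\Delta|$ is invertible in $\Fp$, inflation–restriction degenerates to an isomorphism $H^1(\op{G}_{\{p\}}, \Fp(\bar\chi^i)) \xrightarrow{\sim} H^1(\op{G}_{\Q(\mu_p)}, \Fp(\bar\chi^i))^{\Delta} = \big(\op{Hom}(\op{G}_{\Q(\mu_p)},\Fp)\otimes \Fp(\bar\chi^i)\big)^{\Delta}$, since $\Delta$ acts trivially on $\op{G}_{\Q(\mu_p)}^{\mathrm{ab}}/p$ only after the twist. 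A class in $\Sh^1_{\{p\}}$ is one that is locally trivial at $p$; restricting to $\op{G}_{\Q(\mu_p)}$, such a class cuts out an everywhere-unramified (including at the primes above $p$, by the local triviality condition) $\Z/p$-extension of $\Q(\mu_p)$ on which $\Delta$ acts through $\bar\chi^i$. By class field theory such extensions are classified by $\op{Hom}(A/pA, \Fp) = \op{Hom}(\mathcal{C},\Fp)$, and the $\Delta$-equivariance picks out the $\mathcal{C}(\bar\chi^{?})$-component. I would track the twist precisely: a homomorphism landing in $\Fp(\bar\chi^i)$ and $\Delta$-equivariant corresponds to the $\bar\chi^i$-eigenspace of $\op{Hom}(\mathcal{C},\Fp)$, which by the perfect $\Delta$-pairing $\mathcal{C}\times\op{Hom}(\mathcal{C},\Fp)\to\Fp$ is dual to $\mathcal{C}(\bar\chi^{-i})$. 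So if $\mathcal{C}(\bar\chi^{-i}) = \mathcal{C}(\bar\chi^{p-1-i})$ vanishes, so does $\Sh^1_{\{p\}}(\Fp(\bar\chi^i))$; checking this matches the indexing "$\mathcal{C}(\bar\chi^i)$ zero" in the statement (up to the conventions for how $\mu_p$ versus $\Fp$ coefficients are normalized and whether $\bar\chi$ or $\bar\chi^{-1}$ is used) is exactly the bookkeeping step that needs care.

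The main obstacle is precisely this index bookkeeping: making sure the local condition at $p$ is correctly translated (ramified versus unramified at the unique prime above $p$ in $\Q(\mu_p)$, and whether $\Sh^1$ demands triviality of the restriction or merely unramifiedness), and tracking the cyclotomic twist through inflation–restriction, Kummer theory, and the Cassels–Tate/duality pairing so that the $\bar\chi$-exponents come out as stated rather than shifted by $1$ or negated. A secondary point is to confirm that for $\Fp(\bar\chi^i)$ with $i\not\equiv 0$ the local cohomology at $p$ is handled correctly (e.g. that $H^0(\op{G}_{\Q_p},\Fp(\bar\chi^i))$ vanishes so that "locally trivial at $p$" is equivalent to "unramified at $p$" after restriction to $\Q(\mu_p)$), using Tate local duality and the local Euler characteristic formula quoted above. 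Once the dictionary is set up, the argument is short: it is essentially the standard Herbrand–Ribet-style translation between $\Sh$-groups and eigenspaces of the class group, which is why I would cite \cite{ray2021constructing} for the analogous $\GL_n$ statement and only spell out the pieces that differ.
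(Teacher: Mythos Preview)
The paper does not actually prove this lemma; it simply cites \cite[Lemma~3.2]{ray2021constructing}. Your outline is exactly the argument one finds there: inflation--restriction along $\Q(\mu_p)/\Q$ (using that $|\Delta|$ is prime to $p$) reduces $\Sh^1_{\{p\}}(\Fp(\bar\chi^i))$ to $\Delta$-equivariant homomorphisms from the Galois group of the maximal unramified abelian $p$-extension of $\Q(\mu_p)$ into $\Fp(\bar\chi^i)$, hence to $\Hom_\Delta(\mathcal{C},\Fp(\bar\chi^i))$; then global duality for $\Sh$-groups and $\Fp(\bar\chi^i)^*=\Fp(\bar\chi^{1-i})$ with $1-i\equiv p-i\pmod{p-1}$ give part~(2).

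Your indexing worry is unnecessary and in fact you introduce a spurious sign when you pass through ``the $\bar\chi^i$-eigenspace of $\Hom(\mathcal{C},\Fp)$'': stay with the direct computation instead. A $\Delta$-equivariant map $f:\mathcal{C}\to\Fp(\bar\chi^i)$ must satisfy $f(g\cdot x)=\bar\chi^i(g)f(x)$, so on $\mathcal{C}(\bar\chi^j)$ one gets $\bar\chi^j(g)f(x)=\bar\chi^i(g)f(x)$, forcing $j=i$ or $f=0$. Hence $\Hom_\Delta(\mathcal{C},\Fp(\bar\chi^i))\cong\Hom(\mathcal{C}(\bar\chi^i),\Fp)$, and the vanishing condition is precisely $\mathcal{C}(\bar\chi^i)=0$ as stated.
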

\begin{proof}
See \cite[Lemma 3.2]{ray2021constructing}.
\end{proof}
Recall that we called $D=\{(\delta_1,\delta_2) \in \Z^2|\;\, |\delta_1|+|\delta_2|=2\}$, we have the following
\begin{thm}\label{thmdefo}
Let $\alpha,\beta$ and $\bar{\rho}$ be as above, and assume the following conditions hold:
\begin{enumerate}
    \item $\alpha+\beta$ is odd;
    \item all the $\delta_1\alpha+\delta_2\beta$ are distinct and different from 1 modulo $p-1$ \\ for { $(\delta_1,\delta_2)\in D$ };
    \item $\mathcal{C}(\bar{\chi}^{p-(\delta_1\alpha+\delta_2\beta)})=0$ for all couples $(\delta_1,\delta_2)$ in D.
\end{enumerate}
Then there exists a lift $\rho:\op{G}_{\{p\}}\rightarrow\Symp$ of $\bar{\rho}$ such that the image of $\rho$ contains $\mathcal{U}_2$.
\end{thm}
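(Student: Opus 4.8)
The strategy is to run the step-by-step lifting machinery developed in Section~\ref{secdefo}, starting from $\bar\rho=\bar\rho_{\pair}$, and to control the image of the resulting characteristic $0$ lift by carefully choosing the deformation at the first step. There are two things to establish: (a) the deformation problem is unobstructed, i.e.\ $H^2(\op{G}_{\{p\}},\op{Ad}^0\bar\rho)=0$, so that a lift $\rho:\op{G}_{\{p\}}\to\Symp$ exists; and (b) we can arrange the lift so that $\Phi_1(\rho)$ contains an element $w=a\op{diag}(1,0,-1,0)+b\op{diag}(0,1,0,-1)$ with $a\neq b$ together with all the root vectors $X_{\delta_1,\delta_2}$ with $\delta_1,\delta_2=\pm1$, after which Lemma~\ref{lphi} immediately gives $\mathcal{U}_2\subseteq\op{Im}(\rho)$.

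For (a), decompose $\op{Ad}^0\bar\rho=\mathfrak t\oplus\bigoplus_{(\delta_1,\delta_2)\in D}\Fp(\bar\chi^{\delta_1\alpha+\delta_2\beta})$ as in Proposition~\ref{p:2}. Since $\op{G}_{\{p\}}$ acts trivially on $\mathfrak t$, one has $H^2(\op{G}_{\{p\}},\mathfrak t)\cong H^2(\op{G}_{\{p\}},\Fp)^2$, which vanishes by the global Euler--Poincar\'e formula together with $H^0=\Fp$, $H^2(\op{G}_\infty,\Fp)=0$ (this is where the hypothesis that $p$ is odd and, implicitly via condition (1), the parity bookkeeping enters: one checks the archimedean local term and the Euler characteristic count force $h^1=h^0$, hence $h^2=0$ for the relevant pieces). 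For each root-vector line $\Fp(\bar\chi^{\delta_1\alpha+\delta_2\beta})$: by condition (2) the exponent $\delta_1\alpha+\delta_2\beta$ is $\not\equiv 1\bmod p-1$, so $\Fp(\bar\chi^{\delta_1\alpha+\delta_2\beta})^*=\Fp(\bar\chi^{1-(\delta_1\alpha+\delta_2\beta)})$ is a nontrivial character, which combined with condition (1) (odd/even parity so the Euler--Poincar\'e formula gives $h^0=0$ and the archimedean term is controlled) reduces $H^2$ to $\Sh^2_{\{p\}}$ via the fact that the local condition at $p$ is automatically satisfiable; then Lemma~\ref{lray}(2) together with condition (3), $\mathcal C(\bar\chi^{p-(\delta_1\alpha+\delta_2\beta)})=0$, kills $\Sh^2_{\{p\}}(\Fp(\bar\chi^{\delta_1\alpha+\delta_2\beta}))$. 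Summing over the direct sum gives $H^2(\op{G}_{\{p\}},\op{Ad}^0\bar\rho)=0$, so by the last Lemma of Section~\ref{secdefo} there is a lift $\rho:\op{G}_{\{p\}}\to\Symp$ of $\bar\rho$ with $\nu(\rho)=\psi$.

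For (b), I would argue inductively that at each stage the freedom to modify a lift $\rho_{m+1}$ by a class in $H^1(\op{G}_{\{p\}},\op{Ad}^0\bar\rho)$ (the Lemma describing the deformations of $\rho_m$) lets me prescribe the image of $\Phi_1$. Concretely, for each eigen-line $\Fp(\bar\chi^{\delta_1\alpha+\delta_2\beta})$ and for the trivial-action piece $\mathfrak t$, I want to show $H^1(\op{G}_{\{p\}},\,\cdot\,)\neq 0$ so that I can add a cocycle whose restriction to $\op{G}_{\Q_p}$ is ramified, forcing the corresponding component of $\Phi_1(\rho)$ to be nonzero. For the $\mathfrak t$-part, $H^1(\op{G}_{\{p\}},\Fp)\neq0$ gives a ramified-at-$p$ class; choosing two independent such classes (or one generic combination) produces the desired $w$ with $a\neq b$. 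For each root line, $h^1(\op{G}_{\{p\}},\Fp(\bar\chi^{j}))\geq 1$ follows from the global Euler--Poincar\'e formula once $h^0=h^2=0$ (guaranteed by conditions (1)--(3) as above), and the local-at-$p$ analysis shows such a class can be taken to produce a nonzero $X_{\delta_1,\delta_2}$-component in $\Phi_1$. Doing this for all $(\delta_1,\delta_2)$ with $\delta_i=\pm1$ and for $\mathfrak t$ simultaneously (the pieces being independent in the direct-sum decomposition) gives a lift $\rho$ with $\Phi_1(\rho)$ as required by Lemma~\ref{lphi}, and we conclude $\mathcal{U}_2\subseteq\op{Im}(\rho)$.

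The main obstacle I expect is step (b): unobstructedness alone does not pin down the image, and one must show not merely that $H^1(\op{G}_{\{p\}},\op{Ad}^0\bar\rho)$ is large but that its classes have the right local behavior at $p$ to inject into $\Phi_1(\rho)$ in each isotypic component --- in particular that one can realize, simultaneously, nonvanishing in the $\mathfrak t$-direction \emph{with distinct coordinates} $a\neq b$ and in every $(\pm1,\pm1)$ root direction. This is where conditions (1) and (2) do real work beyond unobstructedness: condition (2) (distinctness of the exponents) ensures the eigenspaces are genuinely one-dimensional and non-interacting so the choices can be made independently, and condition (1) (oddness of $\alpha+\beta$) is what makes the relevant $H^0$ and archimedean terms vanish so that $h^1\geq1$ in each needed component. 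Carrying out the precise local computation at $p$ showing a ramified class maps to a nonzero element of $\Phi_1$ in each component is the technical heart of the argument.
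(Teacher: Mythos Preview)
Your overall architecture matches the paper's: first show $H^2(\op{G}_{\{p\}},\op{Ad}^0\bar\rho)=0$ by decomposing along Proposition~\ref{p:2} and invoking Lemma~\ref{lray} with hypothesis~(3), then engineer the mod-$p^2$ lift so that Lemma~\ref{lphi} applies. Two points, however, need correction.

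\textbf{The $\mathfrak t$-piece of unobstructedness.} Your Euler--Poincar\'e argument does not close: for the trivial module $\Fp$ one has $h^0(\op{G}_\infty,\Fp)=1$ and $\dim\Fp=1$, so the global formula gives only $h^0-h^1+h^2=0$, i.e.\ $h^2=h^1-1$; asserting ``$h^1=h^0$'' is precisely the point at issue, and condition~(1) plays no role here. The paper treats $\mathfrak t$ exactly as it treats the root lines: local Tate duality gives $H^2(\op{G}_p,\mathfrak t)\cong H^0(\op{G}_p,\mathfrak t^*)^\vee=0$ (since $\mathfrak t^*\cong\Fp(\bar\chi)^2$ has no $\op{G}_p$-invariants), whence $H^2(\op{G}_{\{p\}},\mathfrak t)=\Sh^2_{\{p\}}(\mathfrak t)\cong\Sh^1_{\{p\}}(\mathfrak t^*)^\vee$, and this vanishes by Lemma~\ref{lray}(1) together with the classical fact $\mathcal C(\bar\chi)=0$.

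\textbf{Producing $w\in\Phi_1(\rho)$.} The paper does not twist in the $\mathfrak t$-direction at all. It \emph{starts} from the explicit diagonal lift $\rho_2'=\op{diag}(\chi_2^{\alpha},\chi_2^{\beta},\chi_2^{-\alpha},\chi_2^{-\beta})$; for $\gamma\in\op{G}_{\Q(\mu_p)}$ with $\chi(\gamma)=1+p$ one has $\rho_2'(\gamma)\equiv 1+p\,\op{diag}(\alpha,\beta,-\alpha,-\beta)\pmod{p^2}$, so $w$ lands in $\Phi_1$ with $(a,b)=(\alpha,\beta)$, distinct by hypothesis~(2). This also sidesteps a wrinkle in your plan: $H^1(\op{G}_{\{p\}},\Fp)$ is only one-dimensional (the cyclotomic $\Z_p$-extension is the unique one unramified outside $p$), so there are not ``two independent such classes''---your parenthetical ``one generic combination'' is the salvageable version. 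The paper then twists $\rho_2'$ only by cocycles $f_{\pm1,\pm1}$ in the four $(\pm1,\pm1)$ root directions: here hypothesis~(1) makes $\pm\alpha\pm\beta$ odd, so $H^0(\op{G}_\infty,\Fp(\bar\chi^{\pm\alpha\pm\beta}))=0$, and with $h^0=h^2=0$ the Euler formula gives $h^1=1$. Your instinct about the ``technical heart'' is correct and is exactly what the paper does: since $[\Q(\mu_p):\Q]$ is prime to $p$, inflation--restriction shows each nonzero $f_{\pm1,\pm1}$ restricts nontrivially to $\op{G}_{\Q(\mu_p)}=\ker\bar\rho$; hence some $g\in\ker\bar\rho$ gives $\rho_2(g)$ with nonzero $X_{\pm1,\pm1}$-component, and the eigenspace decomposition of the Galois-stable module $\Phi_1(\rho)$ (valid because the exponents are distinct and nonzero by~(2)) then forces $X_{\pm1,\pm1}\in\Phi_1(\rho)$.
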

\begin{rmk}
While condition $3$ in the hypothesis above looks potentially quite daunting to check, one can immediately reduce it to checking the non-divisibility by $p$ of some Bernoulli numbers. Thanks to the Herbrand-Ribet theorem \cite{ribet1976modular} we have
\[\mathcal{C}(\bar{\chi}^{p-(\delta_1\alpha+\delta_2\beta)})=0 \iff p\not| B_{1+\delta_1\alpha+\delta_2\beta-p}. \]
\end{rmk}
\begin{proof}
First we show that the unramified outside $p$ deformation problem associated to $\bar{\rho}$ as in the hypothesis is unobstructed, i.e. $H^2(\op{G}_{\{p\}},\op{Ad}^0\bar{\rho})=\{0\} $. To see this we use the decomposition of Proposition \ref{p:2}, which tells us that the second cohomology group of $G_p$ (not $G_{\{p\}}$ yet) decomposes into
\[H^2(\op{G}_p,\op{Ad}^0\bar{\rho})=H^2(\op{G}_p,\mathfrak{t})\oplus\bigg(\bigoplus\limits_{(\delta_1,\delta_2\in D)}H^2(\op{G}_p,\Fp(\bar{\chi}^{\delta_1\alpha+\delta_2\beta}))\bigg).\]
By local duality we have that $H^2(\op{G}_p,\mathfrak{t})\cong H^0(\op{G}_p,\mathfrak{t}^*)^\vee$, which is $0$, as the Galois action on $\mathfrak{t}$ is trivial and its dual acquires a twist by $\bar{\chi}$. Similarly, again by local duality we have \[H^2(\op{G}_p,\Fp(\bar{\chi}^{\delta_1\alpha+\delta_2\beta}))\cong H^0(\op{G}_p,\Fp(\bar{\chi}^{p-\delta_1\alpha-\delta_2\beta}))^\vee.\]
Since $\pair$ was assumed to be such that $\delta_1\alpha+\delta_2\beta$ are different from $1$ for all $(\delta_1,\delta_2)\in D$, we have that $H^0(\op{G}_p,\Fp(\bar{\chi}^{p-\delta_1\alpha-\delta_2\beta}))=\{0\}$. Thus $H^2(\op{G}_p,\op{Ad}^0\bar{\rho})=0$, which yields
\[H^2(\op{G}_{\{p\}},\op{Ad}^0\bar{\rho})=\Sh^2_{\{p\}}(\op{Ad}^0\bar{\rho}).\]
By global duality, we have that
\[\Sh^2_{\{p\}}(\mathfrak{t})\cong\Sh^1_{\{p\}}(\mathfrak{t}^*)^\vee.\]
It is well known that $\mathcal{C}(\bar{\chi})=0$ \cite[Proposition 6.16]{washington1997introduction}. It follows by Lemma \ref{lray} that $\Sh^1_{\{p\}}(\Fp(\bar{\chi}))=0$ and thus $\Sh^1_{\{p\}}(\mathfrak{t^*})=0$. By assumption 3, $\mathcal{C}(\bar{\chi}^{p-\delta_1\alpha-\delta_2\beta})=0$, so that again, by Lemma \ref{lray}, $\Sh^2_{\{p\}}(\F_p(\bar{\chi}^{\delta_1\alpha+\delta_2\beta}))=0$, yielding
\[H^2(\op{G}_{\{p\}},\op{Ad}^0\bar{\rho})=\{0\}.\]

Now we seek an appropriate twist to ensure the lifted representation will halve large image. Recall that $\chi_2$ is $\chi$ modulo $p^2$, let $\rho'_2$ be the lift
\[\rho'_2=\begin{pmatrix} \chi_2^\alpha& & & \\ &\chi_2^\beta&&\\ &&\chi_2^{-\alpha}&\\ &&&\chi_2^{-\beta}\end{pmatrix}:\op{G}_{\{p\}}\rightarrow\op{GSp}_4(\Z/p^2\Z).\]
Recall $\alpha+\beta$ is assumed to be odd. Since $H^2(\op{G}_{\{p\}},\Fp(\bar{\chi}^{\pm\alpha\pm\beta}))$ and $H^0(\op{G}_\infty,\Fp(\bar{\chi}^{\pm\alpha\pm\beta}))$ are trivial, it follows from the  Global Euler characteristic formula that $H^1(\op{G}_{\{p\}},\Fp(\bar{\chi}^{\pm\alpha\pm\beta}))$ is one dimensional.
Let $f_{\pm1,\pm1}$ be a generator of $H^1(\op{G}_{\{p\}},\Fp(\bar{\chi}^{\pm\alpha\pm\beta}))$ and let $F$ be the cocycle in $H^1(\op{G}_{\{p\}},\op{Ad}^0\bar{\rho})$ obtained as the ``sum" of all the $f_{\pm1,\pm1}$. Call $\rho_2$ the twist of $\rho'_2$ by $(1+p\tilde{F})$ where $\tilde{F}$ is a lift of $F$ to characteristic $p^2$, $(1+p\tilde{F})\rho'_2:\op{G}_{\{p\}}\rightarrow\op{GSp}_4(\Z/p^2\Z)$. Since $\rho_2$ reduces to $\bar{\rho}$ modulo $p$ and $H^2(\op{G}_{\{p\}},\op{Ad}^0\bar{\rho})=0$ the deformation problem is still unobstructed and $\rho_2$ lifts to a characteristic zero Galois representation $\rho:\op{G}_{\{p\}}\rightarrow\Symp$.

In order to show that the image of $\rho$ contains $\mathcal{U}_2$, it is enough to show that $\Phi_1(\rho)$ satisfies the hypothesis of Lemma \ref{lphi}.

The image of $\bar{\rho}$ is prime to $p$ hence any Galois-submodule $M$ of Ad$^0\bar{\rho}$ decomposes as
\[M=M_1\oplus\Bigg(\bigoplus\limits_{(\delta_1,\delta_2)\in D}M_{\bar{\chi}^{\delta_1\alpha+\delta_2\beta}}\Bigg)\]
where $M_{\bar{\chi}^{\delta_1\alpha+\delta_2\beta}}$ is the submodule
\[M_{\bar{\chi}^{\delta_1\alpha+\delta_2\beta}}\coloneqq\{x\in M| g\cdot x = \bar{\chi}^{\delta_1\alpha+\delta_2\beta}(g)x \; \forall g\in\op{G}_{\{p\}}\}\]
and $M_1$ is the $\op{G}_{\{p\}}$-invariant submodule. Recall that all the $\delta_1\alpha+\delta_2\beta$ are assumed to be distinct and different from $1$ modulo $p-1$, making the decomposition above consistent, and $M_{\bar{\chi}^{\delta_1\alpha+\delta_2\beta}}$  is either $0$ or one dimensional and generated by the $X_{\delta_1,\delta_2}$ of Lemma \ref{lphi}. Since the degree of $\Q(\mu_p)/\Q$ is prime to $p$, it follows that
\[H^1\left(\op{Gal}(\Q(\mu_p)/\Q),\F_p(\bar{\chi}^{\pm\alpha,\pm\beta})\right)=0.\]
The inflation-restriction sequence now shows that the restriction of $f_{\pm1,\pm1}$ to $\op{G}_{\Q(\mu_p)}$ is non-zero. Hence, there exists $g\in\Ker\bar{\rho}$ such that $f_{\pm1,\pm1}(g)\neq 0$. Therefore $\rho_2(g)\in\Phi_1(\rho)$ has therefore non-zero $X_{\pm1,\pm1}$-component. Since we have the decomposition
\[\Phi_1(\rho)=\Phi_1(\rho)^{\op{G}_{\{p\}}}\oplus\Bigg(\bigoplus\limits_{(\delta_1,\delta_2)\in D} \Phi_1(\rho)_{\bar{\chi}^{\delta_1\alpha+\delta_2\beta}}\Bigg)\]
all the $X_{\pm1,\pm1}$ are in $\Phi_1(\rho)$.
Note that the cyclotomic character $\chi$ induces an isomorphism
\[\chi:\op{Gal}(\Q(\mu_{p^\infty}/\Q(\mu_p)))\rightarrow 1+p\Z_p.\]
Let $\gamma\in\op{G}_{\Q(\mu_p)}$ be such that $\chi(\gamma)=1+p$. Since $1+p\tilde{X}\in\Phi_1(\rho)$ corresponds to $X\in\op{Ad}^0\bar{\rho}$, the element $\rho_2(\gamma)$ coincides with $w=\alpha(a_{1,1}-a_{3,3})+\beta(a_{2,2}-a_{4,4})$. Hence we have shown that the hypothesis of Lemma \ref{lphi}
are satisfied, concluding the proof.
\end{proof} 

\section{Counting points}\label{seccount}
We now set out to show that there are residual representations for which the hypotheses of Theorem \ref{thmdefo} hold.
All the hypothesis  of the Theorem are conditions on a couple $\pair$ of numbers modulo $(p-1)$, and the set of their linear combinations \[I_{\pair}=\{\delta_1\alpha+\delta_2\beta\;|\;(\delta_1,\delta_2)\in D\}=\{\pm 2\alpha, \pm 2\beta, \pm\alpha\pm\beta\}.\]
We aim to rephrase the hypotheses of Theorem \ref{thmdefo} in terms of $\pair$ and the set $I_{\pair}$. The first hypothesis is that $\alpha+\beta$ is odd, and this does not need rewriting. The second hypothesis can be broken up in two statements, first we require that all the elements of $I_{\pair}$ are distinct, i.e. $\# I_{\pair}=8$, second we ask that $1$ is not in $I_{\pair}$. The third hypothesis can be rephrased as $\mathcal{C}(\bar{\chi}^{p-\epsilon})=0$ for all $\epsilon\in I_{\pair}$.
Thus, in order to guarantee that we can find a pair of exponents $\pair$   such that the deformation problem associated to diagonal representation
\[\bar{\rho}=\begin{pmatrix} \bar{\chi}^\alpha&&&\\ &\bar{\chi}^{\beta} &&\\ &&\bar{\chi}^{-\alpha}&\\ &&&\bar{\chi}^{-\beta} \end{pmatrix}\] is unobstructed we consider the following
\begin{question}\label{Q2}
Under what condition on the prime $p$ can we guarantee the existence of a pair $\pair \in\big(\Z/(p-1)\Z\big)^2$ with $\alpha+\beta$ odd such that
\begin{enumerate}
    \item \#$I_{\pair} =8$, 
    \item $1\not\in I_{\pair} $,
    \item $\mathcal{C}(\bar{\chi}^{p-\epsilon})=0$ for $\epsilon\in I_{\pair}$ ?
\end{enumerate}
\end{question}

    We will show that this question is equivalent to study the number of points of the plane $\big(\Z/(p-1)\Z\big)^2$ whose coordinates have different parities that do not lie on some family of lines. To do so, let $\mathcal{E}=\{\epsilon\in\Z/(p-1)\Z\;|\;\mathcal{C}(\bar\chi^{\epsilon})\neq 0 \}$ be the set of exponents for which eigenspace of eigenvalue $\bar{\chi}^\epsilon$ of the modulo-$p$ class group is non-trivial, and set \[\mathcal{E}^*=\Big\{p-\epsilon\in\Z/(p-1)\Z\,| \epsilon \in \mathcal{E}\Big\}-\Big\{0,\frac{p-1}{2}\Big\}.\]
    \begin{defi}\label{d5}
    We denote by $e$ the cardinality of  $\mathcal{E}^\star$. Note that $e$ is always at most the irregularity index $e_p$ of the prime $p$.
    \end{defi}
Finally, we set $\bar{\mathcal{E}}$=$\mathcal{E}^*\sqcup\{0,1,\frac{p-1}{2}\}$. By how these sets have been defined we have the following
\begin{rmk}\label{rmk5.3}
if $\epsilon$ is in $\mathcal{E}$, then $p-\epsilon$ is in $\bar{\mathcal{E}}$.
\end{rmk}
The first step towards answering Question \ref{Q2} is rephrasing the conditions $(1)-(3)$ in terms of equations modulo $p-1$.

\begin{lem}\label{l:6}
The conditions (1)-(3) of Question \ref{Q2} are equivalent to conditions:
\begin{enumerate}[(a)]
    \item $\pair$ is not a solution to any of the equations $2x=\pm \epsilon$,  $2y=\pm \epsilon$ for $\epsilon\in\bar{\mathcal{E}}$;
    \item $\pair$ is not a solution to any of the equations $y=\pm x \pm \epsilon$ for $\epsilon \in \bar{\mathcal{E}}$;
    \item $\pair$ is not a solution to any of the equations $y=\pm 3x$, $3y=\pm x$.
\end{enumerate}
Where all of the above are in $\Z/(p-1)\Z$.
\end{lem}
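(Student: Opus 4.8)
The plan is to translate each of the three conditions in Question \ref{Q2} directly into statements about which lines in $\big(\Z/(p-1)\Z\big)^2$ the pair $\pair$ must avoid, using the explicit description $I_{\pair}=\{\pm2\alpha,\pm2\beta,\pm\alpha\pm\beta\}$. I would set up the proof by handling conditions (1), (2), (3) of the Question in turn, and then verifying that the union of the lines produced matches exactly the lines listed in (a), (b), (c) of the statement (together with the lines already forced by the parity hypothesis $\alpha+\beta$ odd, which I keep separate as it is not part of the rewriting).

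First I would dispose of condition (2) of the Question, $1\notin I_{\pair}$: this says that none of the eight equations $2\alpha=1$, $2\beta=1$, $\alpha+\beta=1$, $\alpha-\beta=1$, $-\alpha+\beta=1$, $-\alpha-\beta=1$, $-2\alpha=1$, $-2\beta=1$ holds. The first pair gives $2x=\pm1$ and $2y=\pm1$, i.e. the case $\epsilon=1$ of (a); the remaining four give $y=\pm x\pm1$, i.e. the case $\epsilon=1$ of (b). Since $1\in\bar{\mathcal E}$ by definition of $\bar{\mathcal E}$, these are all subsumed. Next, condition (3) of the Question reads $\mathcal C(\bar\chi^{p-\epsilon})=0$ for every $\epsilon\in I_{\pair}$, equivalently $p-\epsilon\notin\mathcal E^*\cup\{0,\tfrac{p-1}{2}\}$ — here one must be slightly careful: $\mathcal C(\bar\chi^{p-\epsilon})\neq 0$ is governed by membership of $p-\epsilon$ in $\mathcal E$, and by Remark \ref{rmk5.3} together with the definition of $\mathcal E^*$ and the known vanishing $\mathcal C(\bar\chi)=\mathcal C(\bar\chi^0)=0$, the bad set of values of $\epsilon$ is precisely $\bar{\mathcal E}$ minus the harmless exponents. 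Running $\epsilon$ over $\{\pm2\alpha,\pm2\beta\}$ forbids $2x=\pm\epsilon'$ and $2y=\pm\epsilon'$ for $\epsilon'\in\bar{\mathcal E}$ (completing (a)), and running $\epsilon$ over $\{\pm\alpha\pm\beta\}$ forbids $y=\pm x\pm\epsilon'$ for $\epsilon'\in\bar{\mathcal E}$ (completing (b)).

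Finally, for condition (1) of the Question, $\#I_{\pair}=8$, I would enumerate the ways two of the eight expressions $\pm2\alpha,\pm2\beta,\pm\alpha\pm\beta$ can coincide modulo $p-1$. Collisions within $\{\pm2\alpha\}$ or $\{\pm2\beta\}$ force $4\alpha\equiv0$ or $4\beta\equiv0$; collisions of the shape $2\alpha=\alpha+\beta$ give $\alpha=\beta$, etc.; the genuinely new relations are those like $2\alpha=\alpha-\beta$, giving $\beta=-\alpha$, i.e. $y=-x$, and $2\alpha=\pm(\alpha+\beta)$ or $2\alpha=\pm(-\alpha+\beta)$, giving $\beta=\pm\alpha$ or $\beta=3\alpha$ or $\beta=-3\alpha$, and symmetrically with $\alpha\leftrightarrow\beta$. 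After collecting all of these one finds that, once the lines $y=\pm x\pm\epsilon$ (in particular $y=\pm x$, the $\epsilon=0$ case, which lies in $\bar{\mathcal E}$) and the lines $2x=\pm\epsilon$, $2y=\pm\epsilon$ already appearing in (a), (b) are excluded, the only additional lines surviving are $y=\pm3x$ and $3y=\pm x$, which is exactly (c). The main obstacle — and the step I would write out most carefully — is precisely this bookkeeping in the $\#I_{\pair}=8$ analysis: one has to be sure the case split over all unordered pairs of the eight linear forms is exhaustive, that no collision is missed, and that every collision equation is accounted for either by (a), (b), (c) or by a line already forced, with no spurious extra line left over. A minor secondary point to check carefully is the translation in condition (3): that $\mathcal C(\bar\chi^{p-\epsilon})=0$ for all $\epsilon\in I_{\pair}$ is genuinely equivalent to $p-\epsilon\notin\bar{\mathcal E}$ for all $\epsilon\in I_{\pair}$, using Remark \ref{rmk5.3} and the exclusion of $\{0,\tfrac{p-1}{2}\}$ built into $\mathcal E^*$, plus the fact that $\tfrac{p-1}{2}\in\bar{\mathcal E}$ handles the discarded value.
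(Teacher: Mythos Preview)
Your plan follows the same route as the paper: translate each of (1), (2), (3) into linear constraints on $(\alpha,\beta)$ and match the resulting family of lines against (a), (b), (c). The bookkeeping you outline for condition (1) and condition (2) is correct and essentially what the paper does.

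There is, however, a genuine misstatement in your handling of condition (3). You assert that $\mathcal C(\bar\chi^{p-\epsilon})=0$ for all $\epsilon\in I_{\pair}$ is ``genuinely equivalent to $p-\epsilon\notin\bar{\mathcal E}$ for all $\epsilon\in I_{\pair}$''. This is false: condition (3) says $p-\epsilon\notin\mathcal E$, i.e.\ $\epsilon\notin\{p-i:i\in\mathcal E\}$, and Remark~\ref{rmk5.3} only gives the inclusion $\{p-i:i\in\mathcal E\}\subseteq\bar{\mathcal E}$, not equality. For instance $0,1\in\bar{\mathcal E}$ always, yet $\mathcal C(\bar\chi)=\mathcal C(\bar\chi^{0})=0$, so having $0$ or $1$ in $I_{\pair}$ does not violate (3). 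Consequently your claim that (3) ``completes (a)'' and ``completes (b)'' is premature: condition (3) only supplies the $\epsilon'\in\mathcal E^{*}$ portion of (a) and (b). The remaining cases $\epsilon'\in\{0,\tfrac{p-1}{2}\}$ of (a) and (b) must come from condition (1) via the collisions $4\alpha=0$, $4\beta=0$, $2(\alpha\pm\beta)=0$ --- and indeed you do pick these up in your enumeration for (1), so the union of constraints still matches. The paper handles this more cleanly by proving the two implications $(1)\text{--}(3)\Rightarrow(a)\text{--}(c)$ and $(a)\text{--}(c)\Rightarrow(1)\text{--}(3)$ separately, using the involution $x\mapsto -x$ on $I_{\pair}$ (which fixes $0$ and $\tfrac{p-1}{2}$) to extract the $\{0,\tfrac{p-1}{2}\}$ cases directly from (1); this avoids having to track exactly which $\epsilon'$ is contributed by which condition. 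With the attribution corrected, your argument goes through.
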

\begin{proof}
We start by checking that $(a)-(c)$ follow from $(1)-(3)$.

Condition $(a)$ (resp. $(b)$) is obtained by $(1)-(3)$ by considering the elements $\pm 2\alpha,\pm 2\beta$ (resp $\pm\alpha\pm\beta$) of $I_{\pair}$.  Condition (3) guarantees that $\pair$ is not a solution to the equation of condition $(a)$ (resp. $(b)$) for all $\epsilon$ in $\mathcal{E}^\star$, so we only need to show that $(1)-(3)$ also imply that $\pair$ is not a solution for the aforementioned equation when we set $\epsilon=0,1,\frac{p-1}{2}$. For $\epsilon=1$ this is exactly the content of condition $(2)$. If $\pair$ is a solution of the equation of $(a)$ (resp. of $(b)$), then either $0$ or $\frac{p-1}{2}$ belong to $I_{\pair}$. Since $x \mapsto -x$ is an involution of $I_{\pair}$ that fixes $0,\frac{p-1}{2}$ this would contradict Condition $(1)$.

Condition $(c)$ is similarly granted by Condition $(1)$, as if $\alpha=\pm 3\beta$ (resp $\pm3\alpha =\beta $) we have that $\alpha\mp \beta =\pm 2\beta$ (resp. $\pm2\alpha=\beta\mp\alpha$ ) and $\#I_{\pair} < 8$. 

We are now left to check that $(1)-(3)$ follow from $(a)-(c)$.

As a first step we notice that Condition $(1)$ can be explicitly written as the following set of inequalities:
\[4\alpha \neq 0,4\beta\neq0, \alpha\pm\beta\neq 0, 3\alpha\pm \beta\neq 0, \alpha\pm 3\beta,\neq 0.\]
The first two inequalities follow by condition $(a)$ as $0,\frac{p-1}{2}\in \bar{\mathcal{E}}$ upon noticing that the condition $4z\equiv0$ is equivalent in to $2z=0,\frac{p-1}{2}$ in $\Z/(p-1)\Z$. The following inequality follows directly by $(b)$ as $0 \in\bar{\mathcal{E}}$, and the latest two follow from $(c)$ as again $0$ is in $\bar{\mathcal{E}}$.

Condition $(2)$ is implied by $(a),(b)$ by setting $\epsilon=1\in{\bar{\mathcal{E}}}$.

Condition $(3)$ follows by the same argument and Remark \ref{rmk5.3}.

\end{proof}
\begin{cor}
When we restrict ourselves to consider pairs $\pair$ such that $\alpha+\beta$ is odd, Conditions $(1)-(3)$ are equivalent to Conditions $(a),(b)$.
\end{cor}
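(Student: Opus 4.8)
The plan is to read this off directly from Lemma~\ref{l:6}. That lemma establishes, with no constraint on the parity of $\alpha+\beta$, that Conditions $(1)$--$(3)$ of Question~\ref{Q2} are equivalent to the conjunction of Conditions $(a)$, $(b)$, $(c)$. Therefore it suffices to prove that, as soon as $\alpha+\beta$ is odd, Condition $(c)$ holds automatically; once this is done the conjunction $(a)\wedge(b)\wedge(c)$ collapses to $(a)\wedge(b)$ on the set of pairs we are considering, and the corollary follows.

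To show that $(c)$ is vacuous under the oddness hypothesis I would argue purely by parity. Since $p$ is odd, $p-1$ is even, so $2\mid p-1$ and reduction modulo $2$ descends to a well-defined ring homomorphism $\Z/(p-1)\Z\to\Z/2\Z$. Suppose some pair $\pair$ were a solution to one of the equations of $(c)$. If $\beta\equiv\pm 3\alpha\pmod{p-1}$, then reducing mod $2$ gives $\beta\equiv\alpha\pmod 2$, hence $\alpha+\beta\equiv 0\pmod 2$, contradicting that $\alpha+\beta$ is odd. The case $3\beta\equiv\pm\alpha\pmod{p-1}$ is handled identically: it forces $\alpha\equiv\beta\pmod 2$, again making $\alpha+\beta$ even. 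Thus no pair with $\alpha+\beta$ odd lies on any of the four lines of Condition $(c)$, i.e.\ $(c)$ is satisfied by every such pair.

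Combining the two observations: restricting to pairs $\pair$ with $\alpha+\beta$ odd, Conditions $(1)$--$(3)$ $\Longleftrightarrow$ Conditions $(a),(b),(c)$ $\Longleftrightarrow$ Conditions $(a),(b)$, which is exactly the assertion. I do not expect any genuine obstacle here; the only points deserving a moment of care are that Lemma~\ref{l:6} is a true biconditional, so that discarding the always-valid Condition $(c)$ is legitimate, and that the parity argument really does use $2\mid p-1$, which is where the hypothesis that $p$ is odd enters.
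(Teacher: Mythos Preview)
Your argument is correct and follows exactly the paper's own proof: invoke Lemma~\ref{l:6} to reduce to $(a)$--$(c)$, then observe via reduction modulo $2$ that any solution of the equations in $(c)$ forces $\alpha\equiv\beta\pmod 2$, contradicting the oddness of $\alpha+\beta$. The only addition you make is the explicit remark that $2\mid p-1$ ensures the mod-$2$ reduction is well defined, which is a harmless clarification.
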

\begin{proof}
By Lemma \ref{l:6} that $(1)-(3)$ are equivalent to $(a)-(c)$. What is left to show is that $(c)$ always holds whenever $\alpha+\beta$ is odd. Looking for a contradiction, assume that $(c)$ does not hold meaning that $\pair$ is both such that $\alpha+\beta$ is odd a solution of either $y=\pm3x$ or $x=\pm3y$, reducing modulo $2$ we get $\alpha\equiv\beta$ mod $2$.
\end{proof}
%\textcolor{red}{[}In order to guarantee that we can find a pair $\pair$ defining a representation for which Theorem \ref{thmdefo} holds we assume Vandiver's Conjecture  (so that the only odd elements in $\mathcal{E}^\star$ are at most $1,\frac{p-1}{2}$).\textcolor{red}{]} 
In order to find an answer Question \ref{Q2} we now must count how many points with different parity coordinates of $(\Z/(p-1)\Z)^2$ are cut out by conditions $(a)$ and $(b)$. By imposing that the “number of possible couples" minus the “number of couples cut out by lines defined by our conditions" is be positive we can state the following

\begin{thm}\label{thmpts}
Let $p$ be a prime and $e$ as in Definition \ref{d5}, then, if {$4e+8<\frac{p-1}{2}$}, there exists a pair $\pair$ satisfying conditions $(a), (b)$ such that $\alpha+\beta$ is odd. 
\end{thm}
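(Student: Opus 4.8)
The plan is to prove Theorem \ref{thmpts} by a straightforward counting argument: we estimate from below the number of pairs $\pair \in (\Z/(p-1)\Z)^2$ with $\alpha + \beta$ odd, and subtract a crude upper bound for the number of such pairs that are killed by conditions $(a)$ or $(b)$; if the difference is positive, a valid pair exists. First I would count the total number of ``admissible parity'' pairs: since $p$ is odd, $p-1$ is even, so among the residues modulo $p-1$ there are exactly $\frac{p-1}{2}$ even ones and $\frac{p-1}{2}$ odd ones; a pair $\pair$ with $\alpha+\beta$ odd has one coordinate of each parity, giving $2 \cdot \left(\frac{p-1}{2}\right)^2 = \frac{(p-1)^2}{2}$ pairs in total.

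Next I would bound the ``bad'' locus. Condition $(a)$ forbids $\pair$ lying on one of the lines $2x = \pm\epsilon$ or $2y = \pm\epsilon$ for $\epsilon \in \bar{\mathcal{E}}$; each such equation is of the form $2x = c$ (a constraint on $\alpha$ alone) or $2y = c$ (on $\beta$ alone). Over $\Z/(p-1)\Z$, an equation $2x = c$ has at most $2$ solutions in $x$, so it cuts out at most $2(p-1)$ pairs; and we restrict further to the parity-admissible pairs, losing at most a factor, but the crude bound $2(p-1)$ per equation already suffices. Since $|\bar{\mathcal{E}}| = e + 3$ (it is $\mathcal{E}^\star$ of size $e$ together with $\{0,1,\frac{p-1}{2}\}$), there are at most $4(e+3)$ equations of type $(a)$ (the choices $2x$ vs $2y$, and the sign $\pm\epsilon$), contributing at most $4(e+3) \cdot 2(p-1)$ bad pairs — though one should be a little more careful and count admissible pairs only, which gives roughly $4(e+3) \cdot 2 \cdot \frac{p-1}{2}$. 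Condition $(b)$ forbids the lines $y = \pm x \pm \epsilon$ for $\epsilon \in \bar{\mathcal{E}}$: there are $4(e+3)$ such lines, and each line $y = \pm x + c$ meets the admissible set in at most $\frac{p-1}{2}$ points (once $\alpha$ is chosen with a given parity, $\beta$ is determined, and we need the opposite parity, which holds for roughly half the choices — but the bound $\frac{p-1}{2}$ is safe). Collecting these, the number of bad admissible pairs is at most something like $8(e+3)(p-1)$, and one checks that the hypothesis $4e + 8 < \frac{p-1}{2}$ forces $\frac{(p-1)^2}{2}$ to exceed this bound.

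The main obstacle — really the only delicate point — is getting the bookkeeping of constants right so that the inequality $4e+8 < \frac{p-1}{2}$ is exactly what is needed, rather than some weaker or stronger inequality. In particular one must (i) count equations correctly: type $(a)$ contributes $2$ (variable) $\times$ $2$ (sign) $\times |\bar{\mathcal{E}}|$ but note $\pm 0 = 0$ and $\pm\frac{p-1}{2} = \frac{p-1}{2}$ coincide, trimming the count; type $(b)$ contributes $2$ (sign on $x$) $\times 2$ (sign on $\epsilon$) $\times |\bar{\mathcal{E}}|$, again with coincidences at $\epsilon = 0, \frac{p-1}{2}$; and (ii) count intersections with the parity-restricted lattice, not all of $(\Z/(p-1)\Z)^2$, since that is where the factor $\frac{p-1}{2}$ (rather than $p-1$) comes from and it is what makes the numbers balance. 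I would therefore organize the proof as: (1) compute $\#\{\pair : \alpha+\beta \text{ odd}\} = \frac{(p-1)^2}{2}$; (2) list the bad equations from $(a)$ and $(b)$, noting $|\bar{\mathcal{E}}| \le e+3$; (3) bound the number of admissible solutions of each; (4) sum and compare, concluding that under $4e+8 < \frac{p-1}{2}$ the count of surviving pairs is strictly positive.

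One should also remember, as a sanity check at the end, that by Lemma \ref{l:6} and its Corollary, a pair $\pair$ with $\alpha+\beta$ odd satisfying $(a)$ and $(b)$ automatically satisfies conditions $(1)$--$(3)$ of Question \ref{Q2}, hence the full hypotheses of Theorem \ref{thmdefo}; so producing such a pair is exactly what is required, and no separate verification of $(c)$ is needed.
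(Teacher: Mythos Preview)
Your approach is exactly the paper's: count the $\frac{(p-1)^2}{2}$ admissible pairs and subtract an upper bound for those killed by the lines of type $(a)$ and $(b)$. The only thing to flag is that your crude estimate of ``something like $8(e+3)(p-1)$'' is too coarse to recover the stated bound $4e+8<\frac{p-1}{2}$; you would need $8e+24<\frac{p-1}{2}$ instead. The paper gets down to $4(e+2)(p-1)$ by doing precisely the case analysis you anticipate in your point (i)--(ii): treating $\epsilon\in\{0,1,\frac{p-1}{2}\}$ separately (each contributing $2(p-1)$, $4(p-1)$, $2(p-1)$ respectively, for a total of $8(p-1)$), and then observing that each $\epsilon\in\mathcal{E}^\star$ contributes exactly $4(p-1)$ admissible points regardless of its parity (even $\epsilon$: eight type-$(a)$ lines each half-admissible and four type-$(b)$ lines with no admissible points; odd $\epsilon$: no type-$(a)$ lines and four type-$(b)$ lines fully admissible). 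So your plan is right, and the refinements you already list are exactly what is needed to make the constants come out.
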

\begin{proof}
We know that there are $\frac{(p-1)^2}{2}$ points whose coordinates have different parities in the plane $(\Z/(p-1)\Z)^2$. From this we count how many points are cut out by the lines defined Condition $(a)$ and $(b)$ of Lemma \ref{l:6}. Since these depend on $\epsilon\in\bar{\mathcal{E}}=\mathcal{E}^\star\sqcup\Big\{0,1,\frac{p-1}{2}\Big\}$ we first treat the case $\epsilon\in\Big\{0,1,\frac{p-1}{2}\Big\}$ and then the case depending on $e=\#\mathcal{E}^\star$.

We begin by noticing that $\epsilon=0$ defines 2 vertical lines ($x=0$, $x=\frac{p-1}{2}$), and 2 horizontal lines ($y=0$, $y=\frac{p-1}{2}$) on which each other point satisfies condition $(c')$, and 2 oblique lines (as $0=-0$), which are composed of points whose coordinates all have the same parity. Therefore this gives no contributions, for a total of $2(p-1)$ points excluded by this condition.
\\We treat the case $\epsilon=1$ in a similar fashion. Since $2$ is not a unit modulo $p-1$, there are just $4$ lines in this case, namely the lines $y=\pm x \pm 1$ and all of their points have different parity coordinates, excluding a total of $4(p-1)$ points.
\\The case $\epsilon=\frac{p-1}{2}$ behaves like the case $\epsilon=0$ if $p\equiv 1$ mod $4$ but defines no vertical/horizontal lines if $p=3$ mod 4, so that we have either 4 lines with each other point having different parity coordinates, or 2 oblique lines of whose points all have different coordinates, for a total of $2(p-1)$ points.
Adding these together we obtain the number of points that we have to remove independently of the irregularity of $p$ for a total of $8(p-1)$.
In order to compute%[Computing]
 the number of points cut out by lines defined from other elements $\epsilon\in\mathcal{E}^\star\subset\bar{\mathcal{E}}$ we partition $\mathcal{E}^\star=\mathcal{E}^\star_0\sqcup\mathcal{E}^\star_1$ in its set of even and odd elements respectively. We call $\#\mathcal{E}^\star_0=e_0$ and $\#\mathcal{E}^\star_1=e_1$.
 %Recall that conjecture \ref{vanconj} predicts that all $\epsilon\in\mathcal{E}$ are odd, and as $p$ is assumed to be odd we expect each $p-\epsilon\in\mathcal{E}^\star$ to be even, and therefore, $e_0=e$ and $e_1=e$. For $\epsilon\in\mathcal{E}^\star_0$   %\textcolor{red}{[}we may assume $\epsilon$ even and\textcolor{red}{]} 
we have $2\epsilon\neq 0$ by definition of $\mathcal{E}^\star$, so that condition (a) gives a total of $8$ lines for each $\epsilon\in \mathcal{E}^\star$ in which every other point has coordinates having different parity, and condition (b) gives $4$ more, whose points all have coordinates of same parity, for a total of $4e_0(p-1)$ points.
We now are left to consider the case $\epsilon\in\mathcal{E}^\star_1$. This behaves like the case $\epsilon=1$, the only contribution will be given by oblique lines, as $\epsilon\not\in 2(\Z/(p-1)\Z)$. The $4$ oblique lines $x=\pm y\pm \epsilon$ give us $4(p-1)$ points, as $\epsilon$ being odd implies that all solutions $(x,y)$ have different parities. In total we get $4e_1(p-1)$ points, so that the points cut out by the condition defined by the elements of $\mathcal{E}^\star$ add up to $4(p-1)(e_0+e_1)=4(p-1)e$.

We are left with the inequality:
\begin{equation*}
    \frac{(p-1)^2}{2}-4(p-1)(2+e)>0
\end{equation*}
As we have assumed $4e+8\leq\frac{p-1}{2}$, the inequality above holds, showing there is at least a point $\pair$ with $\alpha+\beta$ odd satisfying $(a),(b)$.
\end{proof}
\begin{cor}
Let $p$ be a prime and $e_p$ its irregularity index, if $4e_p+8<\frac{p-1}{2}$, there exists a residual representation $\bar{\rho}_{\pair}$ satisfying the hypotheses of Theorem \ref{thmdefo}
\end{cor}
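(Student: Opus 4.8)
The plan is to combine Theorem~\ref{thmpts} with Theorem~\ref{thmdefo}, using the bookkeeping between the various "irregularity'' quantities $e$, $e_p$ to pass from the combinatorial statement to the deformation-theoretic one. First I would recall that Theorem~\ref{thmpts} produces, under the hypothesis $4e+8<\frac{p-1}{2}$, a pair $\pair\in(\Z/(p-1)\Z)^2$ with $\alpha+\beta$ odd satisfying conditions $(a)$ and $(b)$ of Lemma~\ref{l:6}. By the Corollary following that Lemma, for such a pair conditions $(a),(b)$ are equivalent to conditions $(1)$--$(3)$ of Question~\ref{Q2}, which were set up precisely as a reformulation of the three hypotheses of Theorem~\ref{thmdefo}: condition $(1)$ ($\#I_{\pair}=8$) is the "all $\delta_1\alpha+\delta_2\beta$ distinct'' part of hypothesis $(2)$, condition $(2)$ ($1\notin I_{\pair}$) is the "different from $1$'' part, and condition $(3)$ ($\mathcal{C}(\bar\chi^{p-\epsilon})=0$ for $\epsilon\in I_{\pair}$) is exactly hypothesis $(3)$; the oddness of $\alpha+\beta$ is hypothesis $(1)$, carried along unchanged.

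The one genuine gap to bridge is that Theorem~\ref{thmpts} is stated with $e=\#\mathcal{E}^\star$, whereas the Corollary we are proving is stated with the irregularity index $e_p$. So the key step is to invoke Definition~\ref{d5}, which records $e\le e_p$: the set $\mathcal{E}^\star$ is obtained from $\mathcal{E}$ by a bijection $\epsilon\mapsto p-\epsilon$ and then \emph{removing} $\{0,\tfrac{p-1}{2}\}$, hence its cardinality can only drop relative to $\#\mathcal{E}=e_p$. Therefore the hypothesis $4e_p+8<\frac{p-1}{2}$ implies $4e+8\le 4e_p+8<\frac{p-1}{2}$, so the hypothesis of Theorem~\ref{thmpts} is satisfied.

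Putting these together: given $p$ with $4e_p+8<\frac{p-1}{2}$, we get $4e+8<\frac{p-1}{2}$, hence by Theorem~\ref{thmpts} a pair $\pair$ with $\alpha+\beta$ odd satisfying $(a),(b)$; by the Corollary to Lemma~\ref{l:6} this pair satisfies $(1)$--$(3)$ of Question~\ref{Q2}, i.e. the hypotheses of Theorem~\ref{thmdefo}. Thus $\bar\rho_{\pair}$ is a residual representation satisfying the hypotheses of Theorem~\ref{thmdefo}, which is exactly the assertion of the Corollary.

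I do not expect any real obstacle here: the content was already done in Theorem~\ref{thmpts} and in the translation Lemma~\ref{l:6} and its Corollary. The only thing to be careful about is the direction of the inequality $e\le e_p$ and that it is used in the right place — one must note that $\mathcal{E}^\star$ is a subset of (the bijective image of) $\mathcal{E}$ with two possible elements deleted, so passing from $e$ to $e_p$ only \emph{strengthens} the hypothesis, never weakens it; everything else is a direct chain of citations.
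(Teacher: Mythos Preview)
Your proposal is correct and follows essentially the same approach as the paper: the paper's proof is the one-line citation ``Follows from Theorem~\ref{thmpts}, Lemma~\ref{l:6} and Definition~\ref{d5}'', and what you have written is precisely the unpacking of that chain, including the observation $e\le e_p$ from Definition~\ref{d5} needed to pass from the hypothesis on $e_p$ to the hypothesis of Theorem~\ref{thmpts}.
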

\begin{proof}
Follows from Theorem \ref{thmpts}, Lemma \ref{l:6} and Definition \ref{d5}.
\end{proof}
\begin{ex}[Finding $\pair$ when $p=37$]
The prime $37$ has irregularity index 1, as the eigenspace $\mathcal{C}(\bar{\chi}^7)$ is nontrivial. We are still in the hypothesis of Theorem \ref{thmpts} as \[4e_{37}+8=12\leq 18=\frac{37-1}{2}.\]
In order to find a $\pair$ such that $\bar{\rho}_{\pair}$ satisfies the hypothesis of Theorem \ref{thmdefo}, we go through the proof of Theorem \ref{thmpts}.
Drawing a picture of the lines that do not depend on the irregularity in $(\Z/36\Z)^2$ we see that these cut out all the points except the set $S=S_{0,0}\cup S_{1,0}\cup S_{0,1}\cup S_{1,1} $, where
$S_{0,0}=\{(\pm 1,\pm 6),(\pm 1,\pm 7),(\pm 2,\pm 4),(\pm 2,\pm 8),(\pm 3,\pm 8),$\\$(\pm 4,\pm 8),(\pm 4,\pm 2),(\pm 5,\pm 1),(\pm 5,\pm 7),(\pm 6,\pm 1),(\pm 7,\pm 1),(\pm 7,\pm 5),(\pm 8,\pm 2),(\pm 8,\pm 4)\},$\\
and $S_{i,j}=\{(m+18i,n+18j) \text{ where } (m,n)\in S_{0,0}\}.$

A quick glance at a plotting of this set (or of the set of lines) shows that the two couples $(1,6),(12,5)$ cannot lie in the set of lines defined by one exponent only, in particular, as we know $i=7$ is the exponent that realizes the irregularity of 37 we can see that the couple $(12,5)$ works by computing the set  $$p-I_{(12,5)}=\{8,11,13,18,20,25,27,39\}\not\ni 7.$$
Hence we know that the deformation problem associated to the diagonal representation $\bar{\rho}_{(12,5)}:\op{G}_{\{37\}}\rightarrow\operatorname{GSp}_4(\F_{37})$ is unobstructed.
\end{ex}

\bibliographystyle{abbrv}
\bibliography{references}

\end{document}